\newtheorem{theorem}{Theorem}[section]
\newtheorem{corollary}[theorem]{Corollary}
\newtheorem{lemma}[theorem]{Lemma}
\numberwithin{equation}{section}
\DeclareMathOperator{\ext}{ext\,\!}
\DeclareMathOperator{\sign}{sign\,\!}
\begin{document}
\title[Polynomial inequalities on the $\pi/4$--circle sector]{Polynomial inequalities on the $\pi/4$--circle sector}
\author[Ara\'{u}jo \and Jim\'{e}nez \and Mu\~{n}oz \and Seoane]{G. Ara\'{u}jo\textsuperscript{*} \and P. Jim\'{e}nez-Rodr\'iguez\textsuperscript{**} \and G. A. Mu\~{n}oz-Fern\'{a}ndez\textsuperscript{***} \and J. B. Seoane-Sep\'{u}lveda\textsuperscript{***}}
\address{Departamento de An\'{a}lisis Matem\'{a}tico, \newline\indent  Facultad de Ciencias Matem\'{a}ticas, \newline \indent Plaza de Ciencias 3, \newline\indent Universidad Complutense de Madrid,\newline\indent Madrid, 28040 (Spain).}

\email{\newline \indent gdasaraujo@gmail.com \newline \indent pablo.jimenez.rod@gmail.com \newline\indent gustavo\_fernandez@mat.ucm.es \newline\indent jseoane@mat.ucm.es}
\keywords{Bernstein and Markov inequalities, unconditional constants, polarizations constants, polynomial inequalities,
homogeneous polynomials, extreme points.}
\subjclass[2010]{Primary 46G25; Secondary  46B28, 41A44.}
\thanks{\textsuperscript{*}Supported by PDSE/CAPES 8015/14-7.}
\thanks{\textsuperscript{**}The second named author's research was performed during his stay at the Mathematics Department of Kent State University, USA}
\thanks{\textsuperscript{***}Supported by the Spanish Ministry of Science and Innovation, grant MTM2012-34341.}
\begin{abstract}
A number of sharp inequalities are proved for the space ${\mathcal P}\left(^2D\left(\frac{\pi}{4}\right)\right)$ of 2-homogeneous polynomials on ${\mathbb R}^2$ endowed with the supremum norm on the sector $D\left(\frac{\pi}{4}\right):=\left\{e^{i\theta}:\theta\in \left[0,\frac{\pi}{4}\right]\right\}$. Among the main results we can find sharp Bernstein and Markov inequalities and the calculation of the polarization constant and the unconditional constant of the canonical basis of the space ${\mathcal P}\left(^2D\left(\frac{\pi}{4}\right)\right)$.
\end{abstract}
\maketitle

\section{Preliminaries}
The study of low dimensional spaces of polynomials can be an interesting source of examples and counterexamples related to more general questions. In this paper we mind 2-variable, real 2-homogeneous polynomials endowed with the supremum norm on the sector $D\left(\frac{\pi}{4}\right):=\left\{e^{i\theta}:\theta\in \left[0,\frac{\pi}{4}\right]\right\}$. The space of such polynomials is represented by ${\mathcal P}\left(^2D\left(\frac{\pi}{4}\right)\right)$. This paper can be seen as a continuation of \cite{JMPS} and \cite{MPSW}. Other publications in the same spirit can be found in \cite{GMS2,GMSS,MSS_JCA,MSS_MIA,MSS,MRS}.

If $P(x,y)=ax^2+by^2+cxy$, we will often represent $P$ as the point $(a,b,c)$ in ${\mathbb R}^3$. Hence, the norm of ${\mathcal P}\left(^2D\left(\frac{\pi}{4}\right)\right)$ is in fact the norm in ${\mathbb R}^3$ given by
    $$
    \|(a,b,c)\|_{D\left(\frac{\pi}{4}\right)}=\sup\left\{|ax^2+by^2+cxy|:(x,y)\in D\left(\frac{\pi}{4}\right)\right\}.
    $$
In Section \ref{sec:pol_const}, the notation ${\mathcal L}^s\left(^2D\left(\frac{\pi}{4}\right)\right)$ will be useful to represent the symmetric bilinear forms on ${\mathbb R}^2$ endowed with the supremum norm on $D\left(\frac{\pi}{4}\right)$.

In order to obtain sharp polynomial inequalities in ${\mathcal P}\left(^2D\left(\frac{\pi}{4}\right)\right)$ we will use the so called Krein-Milman approach, which is based on the fact that norm attaining convex functions attain their norm at an extreme point of their domain. Hence, an explicit description of the norm $\|\cdot\|_{D\left(\frac{\pi}{4}\right)}$ and the extreme points of the unit ball $B_{D\left({\pi \over 4}\right)}$, denoted by $\ext\left(B_{D\left({\pi \over 4}\right)}\right)$, will be required. Both are presented below:

\begin{lemma}{\cite[Theorem 3.1]{MPSW}} \label{lem_norm}
If $P(x,y)=ax^2+by^2+cxy$, then
    \begin{align*}
    \|P\|_{D\left(\frac{\pi}{4}\right)}&=\begin{cases}
    \max\left\{|a|,\frac{1}{2}|a+b+c|,\frac{1}{2}|a+b+\sign(c)\sqrt{(a-b)^2+c^2}|\right\}&\text{if $c(a-b)\ge 0$,}\\
    \max\{|a|,\frac{1}{2}|a+b+c|\}&\text{if $c(a-b)\leq 0$,}\\
    \end{cases}
    \end{align*}
\end{lemma}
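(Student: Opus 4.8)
The plan is to reduce the two-variable supremum to a one-variable trigonometric extremal problem by using the explicit parametrization of the arc. First I would write every point of $D\left(\frac{\pi}{4}\right)$ as $(\cos\theta,\sin\theta)$ with $\theta\in\left[0,\frac{\pi}{4}\right]$, so that
\[
P(\cos\theta,\sin\theta)=a\cos^2\theta+b\sin^2\theta+c\cos\theta\sin\theta .
\]
Applying the double-angle identities and substituting $\phi=2\theta\in\left[0,\frac{\pi}{2}\right]$, this becomes the single sinusoid
\[
g(\phi):=\frac{a+b}{2}+\frac{a-b}{2}\cos\phi+\frac{c}{2}\sin\phi ,
\]
and therefore $\|P\|_{D\left(\frac{\pi}{4}\right)}=\max_{\phi\in[0,\pi/2]}|g(\phi)|$. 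The entire problem is now to locate the extrema of $g$ on the interval $\left[0,\frac{\pi}{2}\right]$.

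Second, I would compute the endpoint values and the interior critical points. At $\phi=0$ one gets $g(0)=a$ and at $\phi=\frac{\pi}{2}$ one gets $g\!\left(\frac{\pi}{2}\right)=\frac{a+b+c}{2}$, which already account for the terms $|a|$ and $\frac{1}{2}|a+b+c|$ present in both branches of the formula. Differentiating, $g'(\phi)=-\frac{a-b}{2}\sin\phi+\frac{c}{2}\cos\phi$, so the unique critical point of $g$ in $[0,\pi)$ satisfies $\tan\phi=\frac{c}{a-b}$. This point lies inside the open interval $\left(0,\frac{\pi}{2}\right)$ precisely when $\frac{c}{a-b}>0$, that is, when $c(a-b)>0$, and this is exactly what produces the dichotomy in the statement. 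When $c(a-b)\le 0$ there is no interior critical point, $g$ is monotone on $\left[0,\frac{\pi}{2}\right]$, and its extrema occur at the two endpoints, giving the second branch $\max\{|a|,\frac{1}{2}|a+b+c|\}$.

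Third, when $c(a-b)\ge 0$ I would evaluate $g$ at the interior critical point. Writing $g$ in amplitude-phase form $g(\phi)=\frac{a+b}{2}+R\cos(\phi-\psi)$ with $R=\frac{1}{2}\sqrt{(a-b)^2+c^2}$, the interior extremum equals $\frac{a+b}{2}\pm R$, and the point is that the correct sign is governed by $\sign(c)$: one splits into the case $c>0,\ a\ge b$ (where $\psi\in\left[0,\frac{\pi}{2}\right]$ and the extremum is the maximum $\frac{a+b}{2}+R$) and the case $c<0,\ a\le b$ (where the relevant extremum is the minimum $\frac{a+b}{2}-R$). In both sub-cases this value is exactly $\frac{1}{2}\big(a+b+\sign(c)\sqrt{(a-b)^2+c^2}\big)$, the extra term in the first branch. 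Taking absolute values of the three candidate values and their maximum then yields the claimed formula.

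The routine-but-delicate part, and where I would be most careful, is the sign bookkeeping in this last step: one must check that $\cos\psi$ and $\sin\psi$ have the right signs so that the interior critical point really lands in $\left[0,\frac{\pi}{2}\right]$ and selects $\frac{a+b}{2}+\sign(c)R$ rather than the opposite extremum. I would also separately dispose of the degenerate configurations $a=b$ and $c=0$ (and the trivial $a=b,\ c=0$, where $g$ is constant), verifying that the first-branch expression collapses to $\frac{1}{2}|a+b+c|$ so that the two branches agree on the overlap $c(a-b)=0$; this guarantees that the piecewise definition is consistent.
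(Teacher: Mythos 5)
Your proposal is correct, but there is nothing in this paper to compare it against: Lemma~\ref{lem_norm} is not proved here at all, it is quoted verbatim from \cite[Theorem 3.1]{MPSW} and used as a black box. That said, your argument is the natural one and sound (and in the same spirit as the cited reference): by $2$-homogeneity the supremum over the sector reduces to the arc, the double-angle substitution turns $P(\cos\theta,\sin\theta)$ into the sinusoid $g(\phi)=\frac{a+b}{2}+\frac{a-b}{2}\cos\phi+\frac{c}{2}\sin\phi$ on $\left[0,\frac{\pi}{2}\right]$, the endpoints give $a$ and $\frac{a+b+c}{2}$, and an interior critical point (where $\tan\phi_0=\frac{c}{a-b}$) exists exactly when $c(a-b)>0$, contributing $\frac{1}{2}\left(a+b+\sign(c)\sqrt{(a-b)^2+c^2}\right)$. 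Two details deserve the care you promise them. First, taking the maximum of the absolute values of the three candidates really does return $\max|g|$ because your candidate list always contains both the global maximum and the global minimum of $g$: the interior extremum is a maximum when $c>0$, $a\geq b$ and a minimum when $c<0$, $a\leq b$, and the remaining endpoint value lies between them, so its absolute value never exceeds the larger of the other two. Second, the overlap case $c=0$, $a\neq b$ belongs to the first branch and is consistent only under the convention $\sign(0)=0$; with $\sign(0)=1$ the third term would read $\frac{1}{2}|a+b+|a-b||=|\max(a,b)|$, which is wrong (for $a=0$, $b=1$, $c=0$ the norm is $\frac{1}{2}$, not $1$). Since you explicitly flag and check exactly these two points, your outline fills in to a complete proof.
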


\begin{lemma}{\cite[Theorem 4.4]{MPSW}} \label{lem_ext}
The extreme points of the unit ball of $\mathcal{P}(^2D({\pi \over 4}))$ are given by
    $$
    \ext\left(B_{D\left({\pi \over 4}\right)}\right) = \left\{\pm P_t,\pm Q_s,\pm (1,1,0):\ -1 \leq t \leq 1\text{ and } 1 \leq s \leq 5+4\sqrt{2} \right\},
    $$
where
    \begin{align*}
    P_t:&=(t,4+t+4\sqrt{1+t},-2-2t-4 \sqrt{1+t}),\\
    Q_s:&= (1,s,-2\sqrt{2(1+s)}).
    \end{align*}
\end{lemma}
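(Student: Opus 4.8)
The plan is to turn the analytic description of the norm in Lemma \ref{lem_norm} into a geometric description of the boundary of the unit ball $B$ and then read off its extreme points. Writing $\|(a,b,c)\|_{D(\pi/4)}=\max\{f_1,f_2,f_3\}$ with $f_1=|a|$, $f_2=\frac12|a+b+c|$ and $f_3=\frac12|a+b+\sign(c)\sqrt{(a-b)^2+c^2}|$ (the last one only on the region $c(a-b)\ge 0$), the sphere $\partial B$ splits into the planar faces carried by $a=\pm 1$ and by $a+b+c=\pm 2$ (coming from $f_1$ and $f_2$) together with the pieces coming from $f_3=1$. The first key observation is that, isolating the square root and squaring, the equation $f_3=1$ becomes $c^2=4(a+1)(b+1)$ or $c^2=4(a-1)(b-1)$ according to the sign branch; each of these is a quadratic cone, with apex $(-1,-1,0)$ and $(1,1,0)$ respectively.

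Next I would discard the points that cannot be extreme. No point in the relative interior of a planar face is extreme, since it is the midpoint of a genuine segment inside that face. The cone $c^2=4(a+1)(b+1)$ is ruled by the lines through its apex $(-1,-1,0)$; each ruling meets $\partial B$ in a segment, and I would check that this segment stays in $\partial B$ (the linear constraints $f_1,f_2\le 1$ persist along it). Hence every point in the relative interior of a ruling fails to be extreme. This confines the candidate extreme points to the apexes of the two cones and to the curves along which the conical patches meet the planar faces.

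It then remains to analyze those intersection curves and the apexes. Eliminating $c$ from $c^2=4(a+1)(b+1)$ and $a+b+c=2$ gives the parabola $(a-b)^2=8(a+b)$, which is exactly the trace of $P_t$; intersecting the same cone with $a=1$ gives the parabola $c^2=8(b+1)$, the trace of $Q_s$. Since a parabola is strictly convex, every point of these arcs is extreme in the planar face it bounds, hence extreme in $B$. The point $(1,1,0)$ corresponds to $x^2+y^2$, which is identically $1$ on $D(\pi/4)$, so its norm is attained at every point of the sector and it is trivially extreme. The antipodal symmetry $p\mapsto -p$ of $B$ then produces $-P_t$, $-Q_s$ and $-(1,1,0)$, completing the list. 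The admissible ranges fall out of the side constraints: $t\ge -1$ and $s\ge 1$ keep the cone and the region $c(a-b)\ge 0$ meaningful, while $t\le 1$ (that is $|a|\le 1$) and $s\le 5+4\sqrt{2}$ (that is $f_2\le 1$) keep the point on the actual face.

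The main obstacle I expect is the bookkeeping of the sign branches of $f_3$ together with the region condition $c(a-b)\ge 0$: one must verify that the branch $c^2=4(a+1)(b+1)$ is precisely the one in force where $P_t,Q_s$ live, and that the cones, the planar faces and their antipodes glue into a closed convex surface with no gaps or overlaps. The other delicate point is showing that each ruling stays in $\partial B$ all the way to the face at which it terminates (so its interior points are genuinely non-extreme), and, conversely, that the parabolic arcs are exactly the face boundaries, i.e.\ that the stated parameter ranges are sharp. An alternative and perhaps cleaner route is to realize $B$ as the polar of the curve of evaluation functionals $v_\theta=(\cos^2\theta,\sin^2\theta,\cos\theta\sin\theta)$, $\theta\in[0,\pi/4]$, and to use that $p$ is extreme iff the functionals active at $p$ span $\mathbb{R}^3$, an interior maximizing angle contributing both $v_\theta$ and its derivative $v'_\theta$; since any three distinct $v_\theta$ are linearly independent and $\theta\mapsto\langle p,v_\theta\rangle$ is sinusoidal with at most one interior critical angle on $[0,\pi/4]$, extremality forces the norm to be attained either on all of $D(\pi/4)$, giving $\pm(1,1,0)$, or at one interior angle together with an endpoint, giving $\pm P_t$ and $\pm Q_s$.
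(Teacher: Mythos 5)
A preliminary remark: the paper itself offers no proof of this lemma --- it is imported verbatim from \cite[Theorem 4.4]{MPSW} --- so there is no internal argument to compare yours against, and your attempt must stand on its own. Its geometric skeleton is in fact correct, and the key identities check out: $P_t$ does lie on the cone $c^2=4(a+1)(b+1)$ intersected with the plane $a+b+c=2$, $Q_s$ lies on the same cone intersected with $\{a=1\}$, the apexes are $\pm(1,1,0)$, and the parameter bounds $-1\le t\le 1$, $1\le s\le 5+4\sqrt{2}$ do correspond to the side constraints $|a|\le 1$, $c(a-b)\ge 0$ and $|a+b+c|\le 2$.

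Nevertheless, two of your classification claims are false as stated, and they sit exactly where the real work of the proof lies. First, the confinement step does not close: after deleting relative interiors of planar faces and of cone rulings, what survives is not only ``apexes plus cone--plane curves''; it also contains the straight edges where two planar faces meet, for instance the segment of points $(1,b,1-b)$, $1\le b\le 5+4\sqrt{2}$, joining $(1,1,0)$ to $P_1$ inside $\{a=1\}\cap\{a+b+c=2\}$. Interior points of that segment lie on the unit sphere, are not in the claimed list, are not on either cone, and are removed by neither of your two discarding criteria; so completeness of the list is not established. Second, the identification ``cone $\cap$ plane $=$ trace of $P_t$ (resp.\ $Q_s$)'' is wrong, and with it the claim that every point of these intersection curves is extreme. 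The intersection of $c^2=4(a+1)(b+1)$ with $a+b+c=2$ is the \emph{whole} parabola $(a-b)^2=8(a+b)$, of which the $P_t$'s fill only an arc: the point $(0,0,2)$ (the polynomial $2xy$) lies on this intersection and has norm one, yet
$(0,0,2)=\frac{1}{2}\left[(1,1,0)+(-1,-1,4)\right]$
with both summands of norm one (note $(-1,-1,4)=-Q_1$), so it is not extreme; indeed it is interior to a ruling of the antipodal cone, so your own ruling criterion discards a point that your extremality claim includes. Repairing this requires precisely the trimming by the region condition $c(a-b)\ge 0$, the sign branch of the third term in Lemma \ref{lem_norm}, and the side constraints --- the verifications you defer to the ``obstacles'' paragraph --- together with the observation that the planes $a=-1$ and $a+b+c=-2$ pass through the apex $(-1,-1,0)$ and hence cut that cone in rulings rather than parabolas. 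That bookkeeping is not a formality; it is the bulk of the proof. For what it is worth, the dual route you sketch at the end (extremality via the active functionals $v_\theta$ and $v'_{\theta_0}$ spanning $\mathbb{R}^3$, plus the fact that $\theta\mapsto\langle p,v_\theta\rangle$ is sinusoidal with at most one interior critical angle on $[0,\pi/4]$) is the cleaner path: it needs only the converse perturbation argument (non-spanning implies non-extreme) and the explicit solution of three linear equations, and it automatically produces the correct parameter ranges; I would complete that version instead.
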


Let us describe now the three inequalities that will be studied in this paper. Section \ref{sec:Bernstein} is devoted to obtain a Bernstein type inequality for polynomials in $\mathcal{P}\left(^2D\left(\frac{\pi}{4}\right)\right)$. Namely, for a fixed $(x,y)\in D\left(\frac{\pi}{4}\right)$, we find the best (smallest) constant $\Phi(x,y)$ in the inequality
    $$
    \|\nabla P(x,y)\|_2\leq \Phi(x,y)\|P\|_{D\left(\frac{\pi}{4}\right)},
    $$
for all $P\in \mathcal{P}\left(^2D\left(\frac{\pi}{4}\right)\right)$, where $\|\cdot\|_2$ denotes the euclidean norm in ${\mathbb R}^2$. Similarly, we also obtain a Markov global estimate on the gradient of polynomials in $\mathcal{P}\left(^2D\left(\frac{\pi}{4}\right)\right)$, or in other words, the smallest constant $M>0$ in the inequality
    $$
    \|\nabla P(x,y)\|_2\leq M\|P\|_{D\left(\frac{\pi}{4}\right)},
    $$
for all $P\in \mathcal{P}\left(^2D\left(\frac{\pi}{4}\right)\right)$ and $(x,y)\in D\left(\frac{\pi}{4}\right)$. It is necessary to mention that the study of Bernstein and Markov type inequalities has a longstanding tradition. The interested reader can find further information on this classical topic in \cite{BG,Har1,Har,Kre,MR,MN,MSa,Russian,S,Skal,V,W}.

In Section \ref{sec:pol_const} we find the smallest constant $K>0$ in the inequality
    $$
    \|L\|_{D\left(\frac{\pi}{4}\right)}\leq K\|P\|_{D\left(\frac{\pi}{4}\right)},
    $$
where $P$ is an arbitrary polynomial in $\mathcal{P}\left(^2D\left(\frac{\pi}{4}\right)\right)$ and $L\in{\mathcal L}^s\left(^2D\left(\frac{\pi}{4}\right)\right)$ is the polar of $P$. Observe that here $\|L\|_{D\left(\frac{\pi}{4}\right)}$ stands for the sup norm of $L$ over $D\left(\frac{\pi}{4}\right)^2$. Hence, what we do is to provide the polarization constant of the space $\mathcal{P}\left(^2D\left(\frac{\pi}{4}\right)\right)$. The calculation of polarization constants in various polynomial spaces is largely motivated as the  extensive, existing bibliography on the topic shows (see for instance \cite{D,Har1,Martin,IS}).

Finally, in Section \ref{sec:unc_const} we investigate the smallest constant $C>0$ in the inequality
   \begin{equation}\label{equ:unconditionConstant}
   \||P|\|_{D\left(\frac{\pi}{4}\right)}\leq C \|P\|_{D\left(\frac{\pi}{4}\right)},
   \end{equation}
for all $P\in \mathcal{P}\left(^2D\left(\frac{\pi}{4}\right)\right)$, where $|P|$ is the modulus of $P$, i.e., if $P(x,y)=ax^2+by^2+cxy$, then $|P|(x,y)=|a|x^2+|b|y^2+|c|xy$. The constant $C$ turns out to be the unconditional constant of the canonical basis of $\mathcal{P}\left(^2D\left(\frac{\pi}{4}\right)\right)$. It is interesting to note that already in 1914, H. Bohr \cite{B} studied this type of inequalities for infinite complex power series. Actually, the study of Bohr radii is nowadays a fruitful field  (see for instance \cite{BPS,Boas,DF,DGM1,DGM2,DP}). Observe that the relationship between unconditional constants in polynomial spaces and inequalities of the type \eqref{equ:unconditionConstant}  was already noticed in \cite{DGM1}.

\section{Bernstein and Markov-type inequalities for polynomials on sectors}\label{sec:Bernstein}

In this section we provide sharp estimates on the Euclidean length of the gradient $\nabla P$ of a polynomial $P$ in  ${\mathcal P}\left(^2D\left(\frac{\pi}{4}\right)\right)$.

\begin{theorem}
For every $(x,y) \in D\left({\pi \over 4}\right)$ and $P\in{\mathcal P}\left(^2D\left(\frac{\pi}{4}\right)\right)$ we have
    \begin{align*}
    \|\nabla P\|_2\leq \Phi(x,y)\|P\|_{D\left(\frac{\pi}{4}\right)},
    \end{align*}
where
    \begin{align*}
    &\Phi(x,y)\\
    &=\begin{cases}
    4\left[\left(13+8\sqrt{2}\right)x^2+\left(69+48\sqrt{2}\right)y^2-2\left(28+20\sqrt{2}\right)xy\right] & \text{if }0\leq y \leq \frac{\sqrt{2}-1}{2}x \text{ or } \left(4\sqrt{2}-5\right)x\leq y \leq x,\\
    {x^4 \over y^2}+4(x^2+y^2)&\text{if } \frac{\sqrt{2}-1}{2}x \leq y \leq \left(\sqrt{2}-1\right)x,\\
    {\left(3x^2-2xy+3y^2\right)^2 \over 2(x-y)^2}& \text{if } \left(\sqrt{2}-1\right)x\leq y \leq \left(4\sqrt{2}-5\right)x.
    \end{cases}
    \end{align*}

\end{theorem}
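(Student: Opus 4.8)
The plan is to run the Krein--Milman argument advertised in the preliminaries. Fix a point $(x,y)\in D\left(\frac{\pi}{4}\right)$ and write $P=(a,b,c)$, so that $\nabla P(x,y)=(2ax+cy,\,cx+2by)$ depends linearly on the coefficients. Hence $P\mapsto\|\nabla P(x,y)\|_2$ is a continuous seminorm, in particular a convex function, on the finite--dimensional space $\mathcal{P}\left(^2D\left(\frac{\pi}{4}\right)\right)$. The sharp constant is $\Phi(x,y)=\sup\{\|\nabla P(x,y)\|_2:\|P\|_{D\left(\frac{\pi}{4}\right)}\le1\}$, and since a convex continuous function attains its maximum over the compact convex ball $B_{D\left(\frac{\pi}{4}\right)}$ at an extreme point, the problem reduces to
$$\Phi(x,y)=\max\left\{\|\nabla P(x,y)\|_2:\ P\in\ext\left(B_{D\left(\frac{\pi}{4}\right)}\right)\right\}.$$
Observe that $\|\nabla P(x,y)\|_2$ is homogeneous of degree one in $(x,y)$ while the three displayed expressions are homogeneous of degree two; it is therefore the square $\Phi(x,y)^2$ that the statement records, so I would work throughout with $\|\nabla P(x,y)\|_2^2$.

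Next I would insert the extreme points of Lemma~\ref{lem_ext}. The sign is irrelevant because $\|\nabla(-P)\|_2=\|\nabla P\|_2$, and the isolated point $(1,1,0)$ gives only the harmless value $4(x^2+y^2)$, so the work is the two one--parameter families $P_t$ and $Q_s$. I would rationalize them by the substitutions $u=\sqrt{1+t}\in[0,\sqrt2]$ and $v=\sqrt{1+s}\in[\sqrt2,\,2+\sqrt2]$, under which $a=u^2-1$, $b=(u+1)(u+3)$, $c=-2u(u+2)$ for the first family and $a=1$, $b=v^2-1$, $c=-2\sqrt2\,v$ for the second. Using $\|\nabla P(x,y)\|_2^2=4a^2x^2+4b^2y^2+c^2(x^2+y^2)+4(a+b)c\,xy$, each gradient length then becomes an honest quartic polynomial in a single variable $u$ (resp.\ $v$) with coefficients depending on $(x,y)$, and the task is to maximize it over the corresponding parameter interval.

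The family $Q_s$ is the clean one: one finds $\|\nabla Q_s(x,y)\|_2^2=4\left[(1+2v^2)x^2-2\sqrt2\,v^3xy+(v^4+1)y^2\right]$, whose stationary points solve $2y^2v^2-3\sqrt2\,xy\,v+2x^2=0$, giving $v=\frac{\sqrt2\,x}{2y}$ and $v=\frac{\sqrt2\,x}{y}$; the former yields the value $\frac{x^4}{y^2}+4(x^2+y^2)$ and the latter the smaller $4(x^2+y^2)$. The analogous but heavier computation for $P_t$ produces the third expression $\frac{(3x^2-2xy+3y^2)^2}{2(x-y)^2}$ at its interior stationary point. By homogeneity everything depends only on the ratio $y/x$, and the position of each stationary point relative to its parameter interval---for instance $\frac{\sqrt2\,x}{2y}\in[\sqrt2,2+\sqrt2]$ exactly when $\frac{\sqrt2-1}{2}x\le y\le\frac{x}{2}$---together with the comparison of the two family maxima, is what carves the sector into the three regions with boundaries $y=\frac{\sqrt2-1}{2}x$, $y=(\sqrt2-1)x$ and $y=(4\sqrt2-5)x$. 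When both interior critical values fall out of range the maximum is forced onto the shared endpoint $u=\sqrt2$, $v=2+\sqrt2$, i.e.\ the fixed polynomial $P_1=(1,5+4\sqrt2,-4-4\sqrt2)$, for which a direct evaluation gives $\|\nabla P_1(x,y)\|_2^2=4\left[(13+8\sqrt2)x^2+(69+48\sqrt2)y^2-2(28+20\sqrt2)xy\right]$, the first case. Sharpness is then automatic: at every $(x,y)$ the bound is realized by a genuine norm--one extreme polynomial.

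The conceptual content is light; the main obstacle is the bookkeeping in this last step. Differentiating the two quartics yields stationary equations carrying the $\sqrt2$--irrationalities of the extreme points, and one must determine precisely for which ratios $y/x$ the maximizing root lies inside $[0,\sqrt2]$ (resp.\ $[\sqrt2,2+\sqrt2]$), when it is expelled to an endpoint, and which of the two families wins on each subinterval---so that the three regions and their common boundaries emerge consistently. The rationalizing substitutions and the factored forms of $a$, $b$, $c$ above are exactly what keep these comparisons manageable.
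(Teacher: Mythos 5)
Your proposal is correct and follows essentially the same route as the paper's proof: the Krein--Milman reduction to the extreme polynomials of Lemma \ref{lem_ext}, the rationalizing substitutions $u=\sqrt{1+t}$ and $v=\sqrt{1+s}$ (the paper uses $\sqrt{2(1+s)}$, a harmless rescaling), the same interior critical points and critical values, and the same shared endpoint $P_{t=1}=Q_{s=5+4\sqrt{2}}$ producing the first case. The bookkeeping you defer---deciding on each subinterval of $y/x$ which in-range critical or endpoint value dominates, including the endpoints $t=-1$ and $s=1$---is exactly what the paper carries out by comparing its quantities $C_1,\dots,C_8$ (partly via figures), and your observation that the displayed formulas are really the square of the sharp constant is consistent with the paper's computation of squared gradient norms throughout.
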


\begin{proof}
In order to calculate $\Phi(x,y):=\sup\{\|\nabla P(x,y)\|_{2} : \|P\|_{D\left(\frac{\pi}{4}\right)}\leq 1\}$, by the Krein-Milman approach, it is sufficient to calculate $$\sup\{\|\nabla P(x,y)\|_{2} : P\in\mathrm{ext}(B_{D\left(\frac{\pi}{4}\right)})\}.$$ By symmetry, we may just study the polynomials of Lemma \ref{lem_ext} with positive sign. Let us start first with $P_t(x,y)=tx^2+\left(4+t+4\sqrt{1+t}\right)y^2-2\left(1+t+2\sqrt{1+t}\right)xy$, $t\in[-1,1]$. Then,
$$
\nabla P_t(x,y)=\left(2tx-2\left(1+t+2\sqrt{1+t}\right)y, 2\left(4+t+4\sqrt{1+t}\right)y-2\left(1+t+2\sqrt{1+t}\right)x\right),
$$
so that
\begin{align*}
\|\nabla P_t(x,y)\|_2^2=&4t^2x^2+4\left(1+t+2\sqrt{1+t}\right)^2y^2-8t\left(1+t+2\sqrt{1+t}\right)xy \\
&+4\left(4+t+4\sqrt{1+t}\right)^2y^2+4\left(1+t+2\sqrt{1+t}\right)^2x^2 \\
&-8\left(4+t+4\sqrt{1+t}\right)\left(1+t+2\sqrt{1+t}\right)xy
\end{align*}
Make now the change $u=\sqrt{1+t} \in \left[0,\sqrt{2}\right]$, so that
\begin{align*}
\|\nabla P_u(x,y)\|_2^2=&8(x-y)^2u^4+16\left(x^2-4xy+3y^2\right)u^3\\
&+8\left(x^2-10xy+13y^2\right)u^2+32\left(3y^2-xy\right)u+4\left(x^2+9y^2\right).
\end{align*}
Since
$$
\frac{\partial}{\partial u}\|\nabla P_u(x,y)\|_2^2=16\left(2\left(x-y\right)^2u^2+\left(x^2-8xy+7y^2\right)u+2y\left(3y-x\right)\right)\left(u+1\right),
$$
it follows that the critical points of $\|DP_u(x,y)\|_2^2$ are $u={2y \over x-y},u={3y-x \over 2(x-y)}$ and $u=-1$ if $x\neq y$ and $u=4$ and $u=-1$ if $x=y$. Since we need to consider $0 \leq u \leq \sqrt{2}$, we can directly omit the case $x=y$.

Therefore, we can write
$$
{\partial \over \partial u}\|\nabla P_u(x,y)\|_2^2=32(x-y)^2\left(u-{2y \over x-y}\right)\left(u-{3y-x \over 2(x-y)}\right)(u+1).
$$
Let $u_1={2y \over x-y}$ and $u_2={3y-x \over 2(x-y)}$ (Again, since we need to consider $0 \leq u \leq \sqrt{2}$, we can omit the solution $u=-1$). Also, we have the extra conditions $u_1 \in [0,\sqrt{2}]$ whenever $0\leq y\leq\left(\sqrt{2}-1\right)x$ and $u_2 \in [0, \sqrt{2}]$ whenever $\frac{1}{3}x\leq y\leq\left(4\sqrt{2}-5\right)x$. Considering all these facts, we need to compare the quantities
\begin{equation*} \label{quant1} \begin{aligned}
C_1(x,y):=\|\nabla P_{u_1}(x,y)\|_2^2&=\|\nabla P_{t_1}\|_2^2=4{x^6-4x^5y+7x^4y^2-8x^3y^3+7x^2y^4-4xy^5+y^6 \over (x-y)^4} \\
&=4\left(x^2+y^2\right), \end{aligned}
\end{equation*}
for $0\leq y\leq\left(\sqrt{2}-1\right)x$ and $t_1={3y^2+2xy-x^2 \over (x-y)^2}$,
\begin{equation*} \label{quant2}
\begin{aligned}
C_2(x,y):=\|\nabla P_{u_2}(x,y)\|_2^2&=\|\nabla P_{t_2}\|_2^2={9x^6-30x^5y+55x^4y^2-68x^3y^3+55x^2y^4-30xy^5+9y^6 \over 2(x-y)^4} \\
&={\left(3x^2-2xy+3y^2\right)^2 \over 2(x-y)^2},
\end{aligned}
\end{equation*}
for $\frac{1}{3}x\leq y\leq\left(4\sqrt{2}-5\right)x$ and $t_2={5y^2+2xy-3x^2 \over 4(x-y)^2}$,
\begin{equation*} \label{quant4}
C_3(x,y):=\|\nabla P_{t_3=-1}\|_2^2=4\left(x^2+9y^2\right),
\end{equation*}
and
\begin{equation*} \label{quant3}
C_4(x,y):=\|\nabla P_{t_4=1}\|_2^2=4\left[\left(13+8\sqrt{2}\right)x^2+\left(69+48\sqrt{2}\right)y^2-2\left(28+20\sqrt{2}\right)xy\right].
\end{equation*}

Let us focus now on $Q_s=\left(1,s,-2\sqrt{2(1+s)}\right)$, $1\leq s\leq 5+4\sqrt{2}$. Then, we have
$$
\|\nabla Q_s(x,y)\|_2^2=4x^2+4s^2y^2+8(1+s)(x^2+y^2)-8(1+s)\sqrt{2(1+s)}xy.
$$
Making the change $v=\sqrt{2(1+s)} \in \left[2,2+2\sqrt{2}\right],$ we need to study the function
$$
\|\nabla Q_v(x,y)\|_2^2=v^2\left(y^2v^2-4xyv+4x^2\right)+4\left(x^2+y^2\right).
$$
If $x=y=0$ we have $\|\nabla Q_v(x,y)\|_2^2=0$, so we will assume both $x\neq 0$ and $y\neq 0$. The critical points of $\|\nabla Q_v(x,y)\|_2^2$ are $v={x \over y},v={2x \over y}$ and $v=0$ (but $0\notin [2,2+2\sqrt{2}]$). Observe that $v_1=\frac{x}{y}\in \left[2,2+2\sqrt{2}\right]$ whenever $\frac{\sqrt{2}-1}{2}x\leq y\leq \frac{1}{2}x$ and $v_2=\frac{2x}{y}\in \left[2,2+2\sqrt{2}\right]$ whenever $y \geq \left(\sqrt{2}-1\right)x$. Thus, we also need to compare the quantities
$$
C_5(x,y):=\|\nabla Q_{v_1}(x,y)\|_2^2=\|\nabla Q_{s_1}(x,y)\|_2^2={x^4 \over y^2}+4\left(x^2+y^2\right),
$$
for $\frac{\sqrt{2}-1}{2}x\leq y\leq \frac{1}{2}x$ and $s_1=\frac{x^2-2y^2}{2y^2}$,
$$
C_6(x,y):=\|\nabla Q_{v_2}(x,y)\|_2^2=\|\nabla Q_{s_2}(x,y)\|_2^2=4\left(x^2+y^2\right),
$$
for $\left(\sqrt{2}-1\right)x\leq y \leq x$ and $s_2=\frac{2x^2-y^2}{y^2}$, and also
$$
C_7(x,y):=\|\nabla Q_{s_3=1}\|_2^2 =4\left(x^2+y^2\right)+16(x-y)^2,
$$
and
\begin{align*}
C_8(x,y)&:=\|\nabla Q_{s_4=5+4\sqrt{2}}\|_2^2 \\
        & =\left(12+8\sqrt{2}\right)\left[4x^2+\left(12+8\sqrt{2}\right)y^2-\left(8+8\sqrt{2} \right)xy\right]+4\left(x^2+y^2\right) \\
        & = 4\left[\left(13+8\sqrt{2}\right)x^2+\left(69+48\sqrt{2}\right)y^2-2\left(28+20\sqrt{2}\right)xy\right].
\end{align*}
Note that (the reader can take a look at Figures \ref{graph1}, \ref{graph2} and \ref{graph3})
\begin{align*}
& C_1(x,y),C_6(x,y)\leq C_7(x,y)\leq \left\{
	\begin{array}{ll}
		C_4(x,y) & \text{if }0\leq y\leq \frac{2-\sqrt{2}}{2}x \text{ or }\frac{1}{2}x\leq y\leq x, \\
		C_5(x,y) & \text{if }\frac{\sqrt{2}-1}{2}x\leq y\leq \frac{1}{2}x,
	\end{array}\right. \\
& C_3(x,y)\leq \left\{
	\begin{array}{ll}
		C_2(x,y) & \text{if }\frac{1}{3}x\leq y\leq\left(4\sqrt{2}-5\right)x, \\
		C_4(x,y) & \text{if }0\leq y\leq\frac{1}{3}x\text{ or }(4\sqrt{2}-5)x\leq y\leq x,
	\end{array}\right. \\
& C_8(x,y) = C_4(x,y).
\end{align*}
Hence, for $(x,y) \in D\left({\pi \over 4}\right)$,
\begin{align*}
\Phi(x,y) & =\sup\left\{\| \nabla P(x,y) \|_2 \, : \, P \in  \ext\left(B_{D\left({\pi \over 4}\right)}\right) \right\} \\
          &=\begin{cases}
          C_4(x,y) & \text{if $0\leq y \leq \frac{\sqrt{2}-1}{2}x$ or $(4\sqrt{2}-5)x\leq y\leq x$,}\\
		  C_5(x,y)&\text{if $\frac{\sqrt{2}-1}{2}x\leq y\leq (\sqrt{2}-1)x$,}\\
		  C_2(x,y)& \text{if $(\sqrt{2}-1)x\leq y \leq (4\sqrt{2}-5)x$}.
\end{cases}
\end{align*}
In order to illustrate the previous step, the reader can take a look at Figure \ref{graph4}.
\end{proof}

\begin{figure}
\centering
\includegraphics[width=0.9\textwidth]{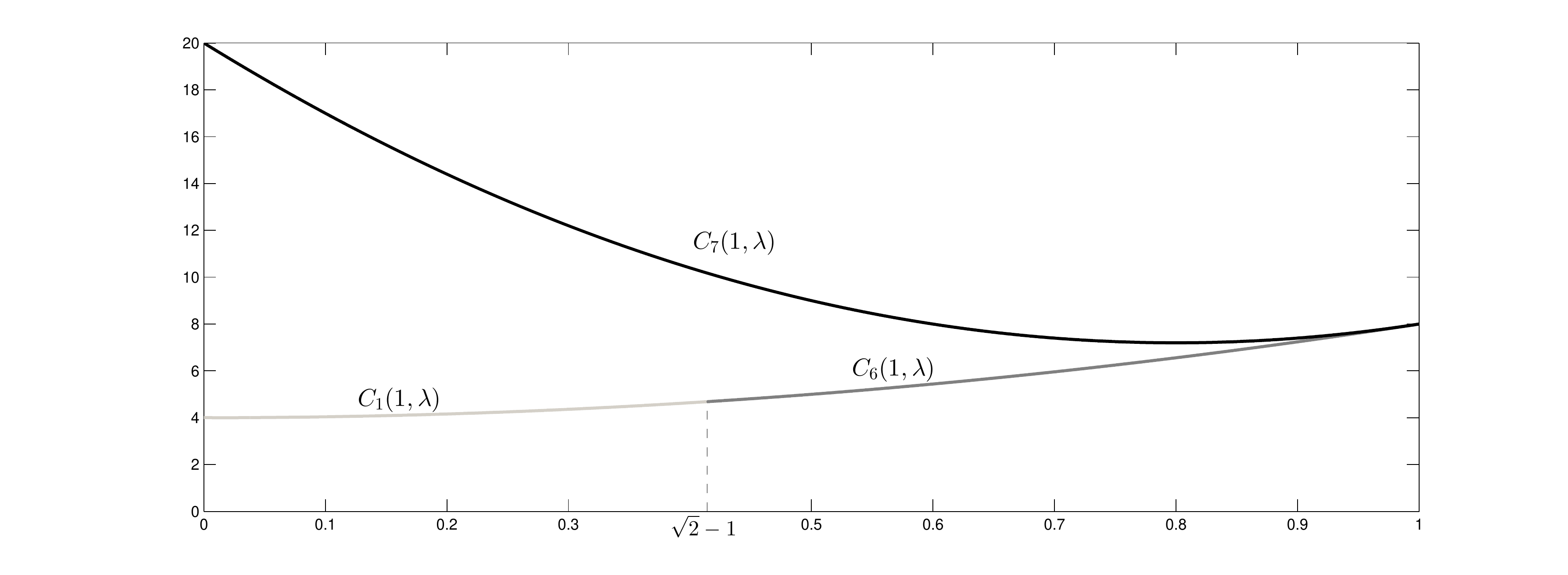}
\caption{Graphs of the mappings $C_1(1,\lambda)$, $C_6(1,\lambda)$, $C_7(1,\lambda)$.}\label{graph1}
\end{figure}

\begin{figure}
\centering
\includegraphics[width=0.9\textwidth]{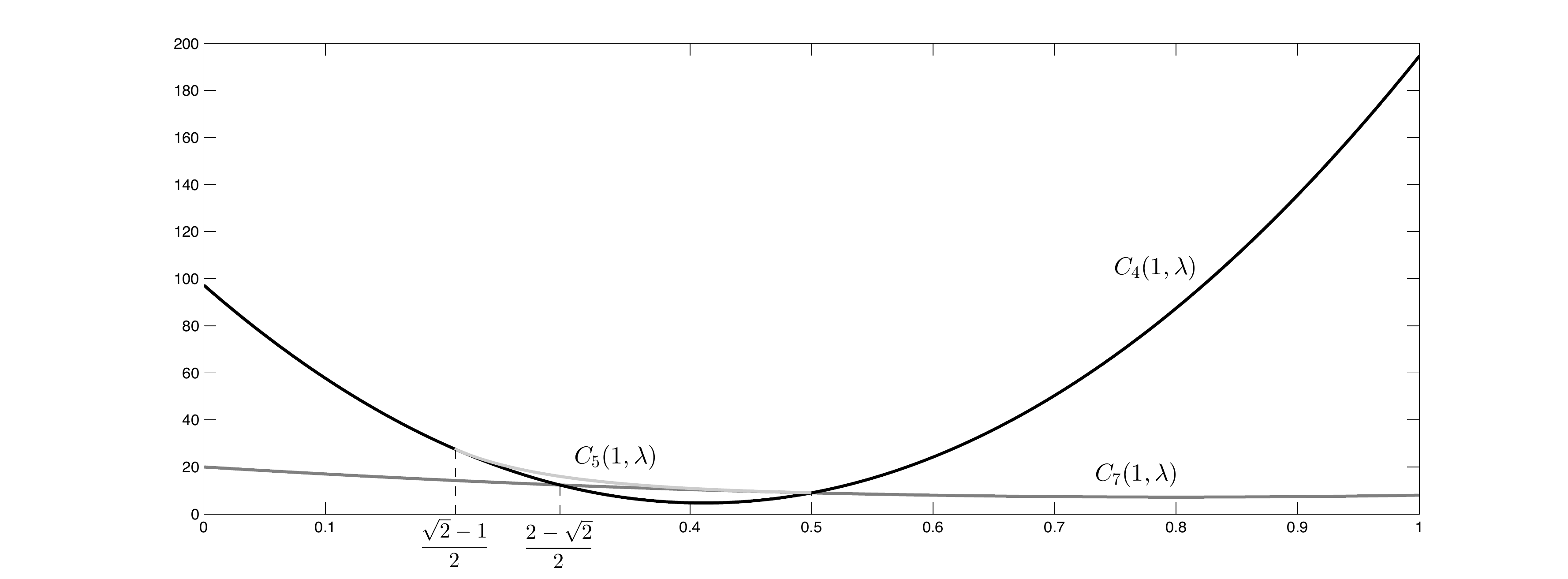}
\caption{Graphs of the mappings $C_4(1,\lambda)$, $C_5(1,\lambda)$, $C_7(1,\lambda)$.}\label{graph2}
\end{figure}

\begin{figure}
\centering
\includegraphics[width=0.9\textwidth]{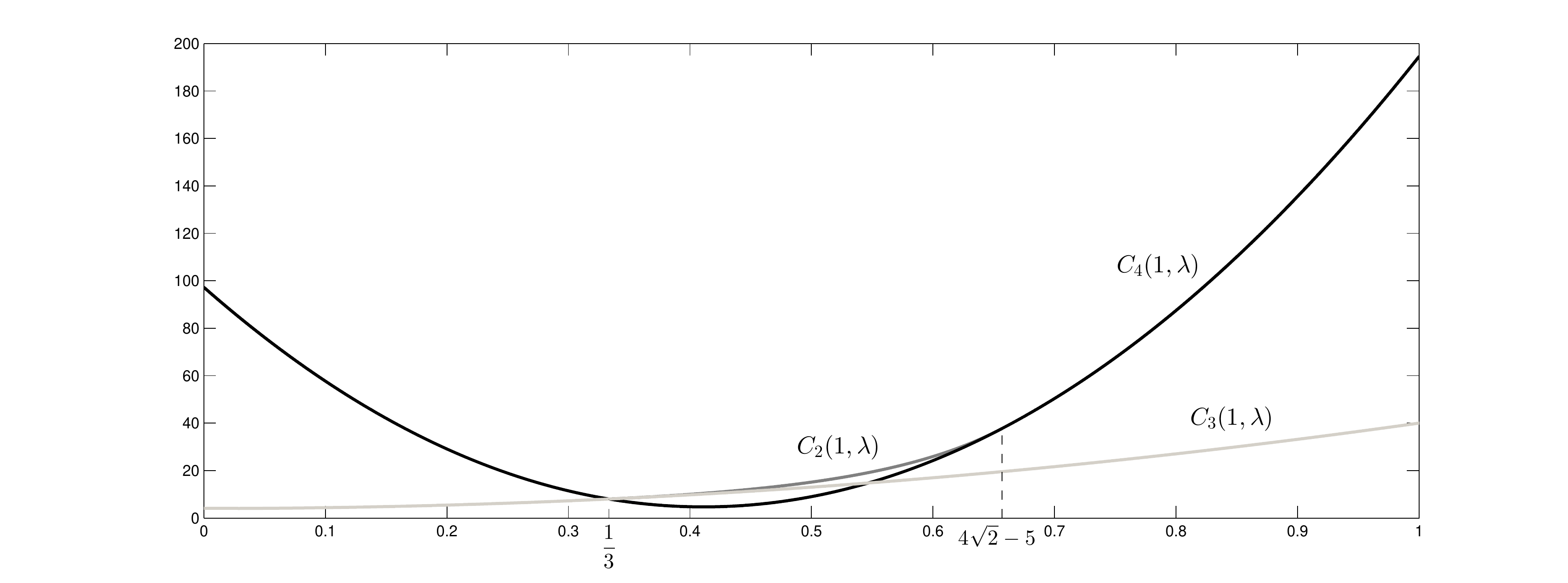}
\caption{Graphs of the mappings $C_2(1,\lambda)$, $C_3(1,\lambda)$, $C_4(1,\lambda)$.}\label{graph3}
\end{figure}

\begin{figure}
\centering
\includegraphics[width=0.9\textwidth]{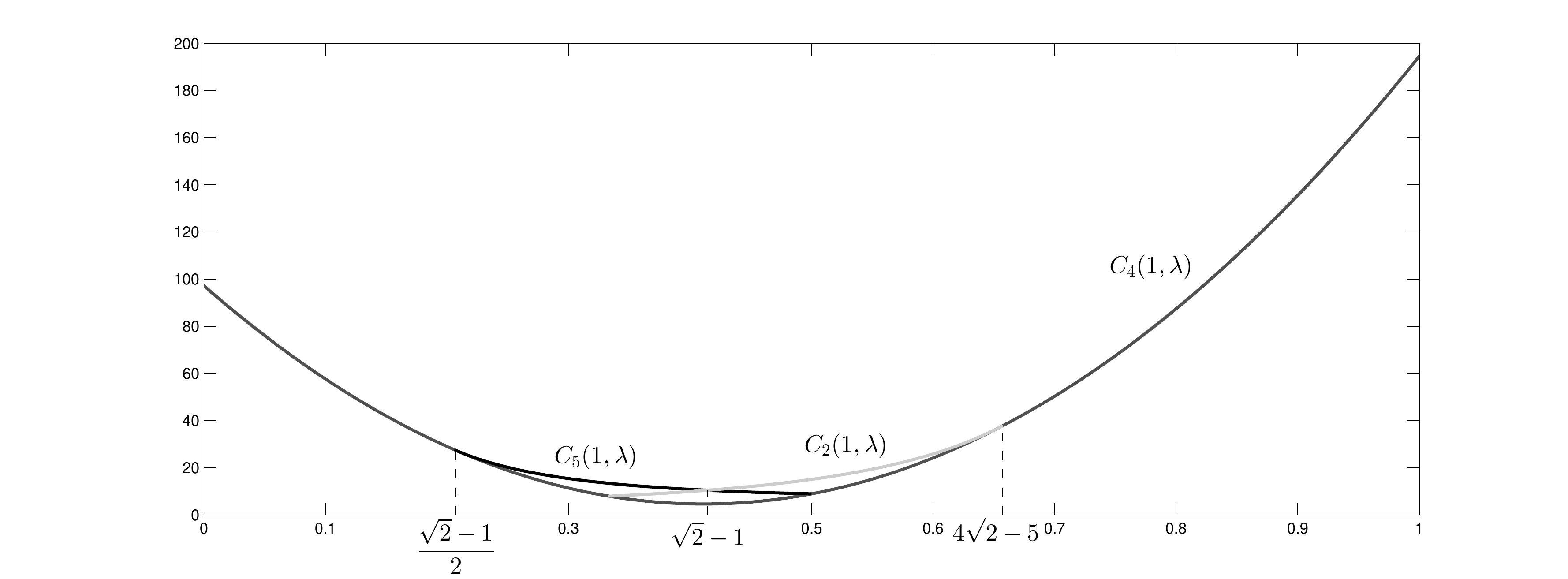}
\caption{Graphs of the mappings $C_2(1,\lambda)$, $C_4(1,\lambda)$, $C_5(1,\lambda)$.}\label{graph4}
\end{figure}

\begin{corollary}
If  $P\in{\mathcal P}\left(D\left(\frac{\pi}{4}\right)\right)$, then
    $$
    \sup\left\{\|\nabla P(x,y)\|_2:(x,y)\in D\left(\frac{\pi}{4}\right)\right\}\leq 4(13+8\sqrt{2})\|P\|_{D\left(\frac{\pi}{4}\right)},
    $$
with equality for the polynomials $P_1(x,y)=\pm\left(x^2+(5+4\sqrt{2})y^2-2(2+2\sqrt{2})xy\right)$.
\end{corollary}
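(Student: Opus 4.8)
The plan is to combine the pointwise estimate of the Theorem with a single global maximization over the sector. Writing $\Phi$ for the sharp pointwise function produced there, the best global constant is exactly $\sup\{\Phi(x,y):(x,y)\in D\left(\frac{\pi}{4}\right)\}$, so everything reduces to maximizing $\Phi$. Each of its three branches $C_4,C_5,C_2$ is homogeneous of degree $2$, while $D\left(\frac{\pi}{4}\right)$ is the circular arc $\{(\cos\theta,\sin\theta):\theta\in[0,\pi/4]\}$; hence I would substitute $(x,y)=(\cos\theta,\sin\theta)$ and reduce everything to a function of the single variable $\theta$.

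First I would dispatch the branch $C_4$, which governs $\Phi$ near the two ends of the arc. Converting to double angles gives
\begin{equation*}
C_4(\cos\theta,\sin\theta)=4\left[(41+28\sqrt{2})-(28+20\sqrt{2})(\cos 2\theta+\sin 2\theta)\right],
\end{equation*}
and since $\cos 2\theta+\sin 2\theta=\sqrt{2}\sin\!\left(2\theta+\frac{\pi}{4}\right)$ attains on $[0,\pi/4]$ its minimum value $1$ exactly at $\theta=0$ and $\theta=\pi/4$, the negative coefficient forces $C_4$ to be largest at those two endpoints, where it equals $4(13+8\sqrt{2})$.

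Next I would show that the two interior branches remain strictly below this level. On the unit arc they collapse to the transparent forms
\begin{equation*}
C_5(\cos\theta,\sin\theta)=\frac{\cos^4\theta}{\sin^2\theta}+4,\qquad C_2(\cos\theta,\sin\theta)=\frac{(3-\sin 2\theta)^2}{2(1-\sin 2\theta)},
\end{equation*}
using $3x^2-2xy+3y^2=3-\sin 2\theta$ and $(x-y)^2=1-\sin 2\theta$ when $x^2+y^2=1$. The first is strictly decreasing in $\theta$, hence maximized on its subinterval at the left transition angle; the second is strictly increasing in $\sin 2\theta$, hence maximized at the right transition angle. At each transition angle the value agrees with $C_4$ by continuity, and a direct evaluation shows this common value lies well below $4(13+8\sqrt{2})$. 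Thus $\Phi$ forms a valley along the arc, decreasing on $[0,\pi/8]$ and increasing on $[\pi/8,\pi/4]$, so that $\sup_{D\left(\frac{\pi}{4}\right)}\Phi=4(13+8\sqrt{2})$.

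Finally I would certify sharpness directly. By Lemma \ref{lem_ext} the polynomial $P_1=(1,5+4\sqrt{2},-2(2+2\sqrt{2}))$ is an extreme point of the unit ball, so $\|P_1\|_{D\left(\frac{\pi}{4}\right)}=1$; evaluating its gradient at the corner $(x,y)=(1,0)$ yields $\|\nabla P_1(1,0)\|_2^2=4+(4+4\sqrt{2})^2=4(13+8\sqrt{2})=\Phi(1,0)$, so the global bound is attained (likewise at $\theta=\pi/4$, and for the opposite sign). The main obstacle is precisely the third step: one must rule out a larger value hidden in the interior of the $C_5$ or $C_2$ branch, rather than merely optimizing the single smooth formula $C_4$. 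The monotonicity of each interior branch toward its shared boundary with $C_4$ is what closes this gap and, pleasantly, shows that the extremal configuration sits at the corner $\theta=0$ of the sector rather than in its interior.
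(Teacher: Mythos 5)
Your proof is correct and takes essentially the paper's route: the corollary is just the assertion that $\sup\left\{\Phi(x,y):(x,y)\in D\left(\frac{\pi}{4}\right)\right\}=\Phi(1,0)=4(13+8\sqrt{2})$, which the paper leaves implicit after the theorem and which you verify by homogeneity, the substitution $(x,y)=(\cos\theta,\sin\theta)$, and branch-by-branch monotonicity (replacing the paper's appeal to the figures), with the transition values $19+6\sqrt{2}$ and $264-160\sqrt{2}$ indeed lying below $4(13+8\sqrt{2})$ as you claim. One remark: your computation $\|\nabla P_1(1,0)\|_2^2=4(13+8\sqrt{2})=\Phi(1,0)$ shows that the stated equality holds for the \emph{squared} gradient norm, a conflation of $\|\nabla P\|_2$ with $\|\nabla P\|_2^2$ that you inherit from the theorem and corollary as printed, so this is consistency with the paper rather than a gap in your argument.
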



\section{Polarization constants for polynomials on sectors}\label{sec:pol_const}

In this section we find the exact value of the polarization constant of the space ${\mathcal P}\left(^2D\left(\frac{\pi}{4}\right)\right)$. In order to do that, we prove a Bernstein type inequality for polynomials in ${\mathcal P}\left(^2D\left(\frac{\pi}{4}\right)\right)$. Observe that if  $P\in {\mathcal P}\left(^2D\left(\frac{\pi}{4}\right)\right)$ and $(x,y)\in D\left(\frac{\pi}{4}\right)$ then the differential $DP(x,y)$ of $P$ at $(x,y)$ can be viewed as a linear form. What we shall do is to find the best estimate for $\|DP(x,y)\|_{D\left(\frac{\pi}{4}\right)}$ (the sup norm of $DP(x,y)$ over the sector $D\left(\frac{\pi}{4}\right)$) in terms of $(x,y)$ and $\|P\|_{D\left(\frac{\pi}{4}\right)}$. First,
we state a lemma that will be useful in the future:

\begin{lemma} \label{lem_aux}
Let $a,b \in \mathbb{R}$. Then,
\begin{align*}
\sup_{\theta \in \left[0, {\pi \over 4}\right]}|a\cos \theta+b\sin \theta|&=\begin{cases}\max\left\{|a|,{\sqrt{2}\over 2}|a+b|\right\} & \text{if ${b \over a}>1$ or ${b \over a} <0$}, \\
\sqrt{a^2+b^2} & \text{otherwise.}
\end{cases} \\
&=\begin{cases} \sqrt{a^2+b^2} & \text{if $0<{b \over a}<1$}, \\
{\sqrt{2} \over 2}|a+b| & \text{if $\left(1-\sqrt{2}\right)b<a<b$ or $b<a<\left(1-\sqrt{2}\right)b$},\\
|a| & \text{if $-\left(1+\sqrt{2}\right)a<b<0$ or $0<b<-\left(1+\sqrt{2}\right)a$}.
\end{cases}
\end{align*}

\end{lemma}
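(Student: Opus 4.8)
The plan is to treat $f(\theta)=a\cos\theta+b\sin\theta$ as a smooth function on the compact interval $[0,\pi/4]$, so that its supremum is a maximum attained either at an endpoint or at an interior critical point. First I would record the two endpoint values, $|f(0)|=|a|$ and $|f(\pi/4)|=\frac{\sqrt{2}}{2}|a+b|$, and then locate the interior critical points by solving $f'(\theta)=b\cos\theta-a\sin\theta=0$, i.e. $\tan\theta=b/a$. Since $\tan$ maps $[0,\pi/4]$ bijectively onto $[0,1]$, such a critical point lies in the open interval exactly when $0<b/a<1$, the degenerate cases $a=0$ and $b=0$ being checked directly.

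For the value at the critical point, rather than substituting I would invoke Cauchy--Schwarz: $|a\cos\theta+b\sin\theta|\le\sqrt{a^2+b^2}$ for every $\theta$, with equality precisely when $(\cos\theta,\sin\theta)$ is parallel to $(a,b)$, which on our arc happens iff $b/a\in[0,1]$. Thus when $0<b/a<1$ the amplitude $\sqrt{a^2+b^2}$ is attained and is the answer, while when $b/a\le 0$ or $b/a\ge 1$ the derivative $f'$ has no zero in $(0,\pi/4)$, so $f$ is monotone there and the maximum of $|f|$ sits at an endpoint, giving $\max\{|a|,\frac{\sqrt{2}}{2}|a+b|\}$. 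This already yields the first of the two displayed formulas.

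To pass to the second, fully case-split formula I would, in the endpoint regime, decide which of $|a|$ and $\frac{\sqrt{2}}{2}|a+b|$ is larger. Squaring reduces this to the sign of $(a+b)^2-2a^2=b^2+2ab-a^2$, whose roots as a quadratic in $b$ are $b=(\sqrt{2}-1)a$ and $b=-(1+\sqrt{2})a$. Tracking these two lines through the origin together with the regime constraints $b/a\le 0$ or $b/a\ge 1$, I would translate the comparison into the stated chains of inequalities: $\frac{\sqrt{2}}{2}|a+b|$ dominates on the sectors $(1-\sqrt{2})b<a<b$ or $b<a<(1-\sqrt{2})b$, while $|a|$ dominates on $-(1+\sqrt{2})a<b<0$ or $0<b<-(1+\sqrt{2})a$.

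The hard part will be the bookkeeping in this last step: the absolute values force a split according to the signs of $a$, $b$, and $a+b$, and one must verify that the four sectors named above, together with the strip $0<b/a<1$ where $\sqrt{a^2+b^2}$ wins, tile the plane without overlap and agree on the boundary lines $b=0$, $b=a$, $b=(\sqrt{2}-1)a$, and $b=-(1+\sqrt{2})a$, where the competing expressions coincide. Handling the axes $a=0$ and $b=0$ and the degenerate alignment $a=b$ separately keeps the ratio $b/a$ well defined throughout the main argument.
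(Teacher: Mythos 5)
Your proposal is correct: the endpoint values $|a|$ and $\frac{\sqrt{2}}{2}|a+b|$, the interior critical point at $\tan\theta = b/a$ (present exactly when $0<b/a<1$), the Cauchy--Schwarz identification of the interior maximum as $\sqrt{a^2+b^2}$, and the comparison of the endpoint values via the sign of $b^2+2ab-a^2$ with roots $b=(\sqrt{2}-1)a$ and $b=-(1+\sqrt{2})a$ together yield both displayed formulas, and the sector bookkeeping you outline does close up (the four strict-inequality regions tile the plane off the boundary lines, where the competing expressions coincide). The paper itself states this lemma without any proof, treating it as elementary, so your argument simply supplies the standard calculus derivation the authors evidently had in mind.
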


\begin{theorem}
For every $(x,y)\in D(\frac{\pi}{4})$ and $P\in {\mathcal P}(^2D(\frac{\pi}{4}))$ we have that
    \begin{equation}\label{equ:Phipi4}
    \|DP(x,y)\|_{D(\frac{\pi}{4})}\leq \Psi(x,y)\|P\|_{D(\frac{\pi}{4})},
    \end{equation}
where
$$
\Psi(x,y)=\left\{ \begin{array}{ll} \sqrt{2}\left[\left(1+2\sqrt{2}\right)x-\left(3+2\sqrt{2}\right)y\right] & \text{if }0 \leq y < {2\sqrt{2}-1 \over 7}x, \\
{\sqrt{2}(x^2+3y^2) \over 2y} & \text{if }{2\sqrt{2}-1 \over 7}x \leq y < (\sqrt{2}-1) x, \\
2\left(x+{y^2 \over x-y}\right) & \text{if }(\sqrt{2}-1) x \leq y < \left(2-\sqrt{2}\right)x, \\
4\left(1+\sqrt{2}\right)y-2x & \text{if }\left(2-\sqrt{2}\right)x \leq y \leq x
\end{array} \right.
$$
Moreover, inequality \eqref{equ:Phipi4} is optimal for each $(x,y)\in D(\frac{\pi}{4})$.
\end{theorem}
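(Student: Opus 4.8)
The plan is to repeat the Krein--Milman argument used for $\Phi$. For a fixed $(x,y)$, the map $P\mapsto \|DP(x,y)\|_{D(\frac{\pi}{4})}$ is a seminorm on ${\mathcal P}(^2D(\frac{\pi}{4}))$, hence convex, so its supremum over the unit ball is attained at an extreme point. By Lemma \ref{lem_ext} and the evenness of the functional in $P$, it therefore suffices to evaluate
$$
\Psi(x,y)=\sup\left\{\|DP(x,y)\|_{D(\frac{\pi}{4})}\ :\ P\in\{P_t,\,Q_s,\,(1,1,0)\}\right\}
$$
over the parameter ranges $t\in[-1,1]$ and $s\in[1,5+4\sqrt{2}]$.

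First I would write $DP(x,y)$ explicitly. If $P(x,y)=ax^2+by^2+cxy$, then $DP(x,y)$ is the linear form $(\cos\theta,\sin\theta)\mapsto \alpha\cos\theta+\beta\sin\theta$, where $\alpha=2ax+cy$ and $\beta=2by+cx$ are precisely the two components of $\nabla P(x,y)$. Consequently
$$
\|DP(x,y)\|_{D(\frac{\pi}{4})}=\sup_{\theta\in[0,\frac{\pi}{4}]}|\alpha\cos\theta+\beta\sin\theta|,
$$
which is exactly the quantity computed in Lemma \ref{lem_aux}. This is the structural heart of the argument: it converts the sector sup norm of the differential into a three branch formula depending only on the ratio $\beta/\alpha$. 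On the branch $0<\beta/\alpha<1$ the value equals $\sqrt{\alpha^2+\beta^2}=\|\nabla P(x,y)\|_2$, so the relevant quantities $C_i$ from the proof of the previous theorem can be reused directly; on the remaining two branches the supremum collapses either to $|\alpha|$ (the value at $\theta=0$) or to $\frac{\sqrt{2}}{2}|\alpha+\beta|$ (the value at $\theta=\frac{\pi}{4}$).

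Next, for each extreme family I would substitute $u=\sqrt{1+t}$ (for $P_t$) and $v=\sqrt{2(1+s)}$ (for $Q_s$), exactly as in the previous proof, so that $\alpha$ and $\beta$ become polynomials in the auxiliary variable. For each family I would then (i) determine, as a function of the parameter and of $(x,y)$, which of the three branches of Lemma \ref{lem_aux} is active, which amounts to solving the sign and magnitude inequalities for $\beta/\alpha$, and (ii) maximize the resulting one variable expression over the admissible parameter interval by the usual critical point plus endpoint analysis. Finally I would compare the candidate maxima coming from $P_t$, from $Q_s$, and from $(1,1,0)$ across the plane and patch them into the four regions $0\le y<\frac{2\sqrt{2}-1}{7}x$, $\frac{2\sqrt{2}-1}{7}x\le y<(\sqrt{2}-1)x$, $(\sqrt{2}-1)x\le y<(2-\sqrt{2})x$, and $(2-\sqrt{2})x\le y\le x$, matching the stated $\Psi$. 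Optimality on each region then follows because its supremum is realized by one explicit extreme polynomial.

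The main obstacle is the two level case analysis: the branch of Lemma \ref{lem_aux} that governs $\|DP(x,y)\|_{D(\frac{\pi}{4})}$ itself depends on the parameter and on $(x,y)$, so the function being optimized in the parameter changes across thresholds, and these thresholds interact with the boundaries $\frac{2\sqrt{2}-1}{7}x$, $(\sqrt{2}-1)x$, and $(2-\sqrt{2})x$ that delimit the final regions. Keeping this bookkeeping consistent, in particular verifying which candidate dominates on each sliver and that the transitions are continuous, is where the bulk of the careful work lies; the individual optimizations are routine single variable calculus once the active branch has been fixed.
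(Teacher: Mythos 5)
Your plan is correct and, at the level of its skeleton, coincides with the paper's own proof: Krein--Milman reduction to the extreme polynomials of Lemma \ref{lem_ext} (with signs discarded by evenness of the functional), the substitutions $u=\sqrt{1+t}$ and $v=\sqrt{2(1+s)}$, a case analysis producing a list of candidate maxima, and a final patching of the dominant candidates over the four regions. Where you genuinely diverge is in how the two-variable optimization is organized. The paper fixes $\lambda=y/x$ and maximizes $|f_\lambda(t,\theta)|$ \emph{jointly} over the rectangle $[-1,1]\times[0,\pi/4]$: interior critical points come from solving $\partial f_\lambda/\partial t=\partial f_\lambda/\partial\theta=0$ as a coupled system, and Lemma \ref{lem_aux} enters only on the parameter-endpoint edges $t=\pm1$, $s=1$, $s=5+4\sqrt{2}$, while the edges $\theta=0,\pi/4$ are done by hand. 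You instead eliminate $\theta$ first, applying Lemma \ref{lem_aux} pointwise in the parameter, leaving one piecewise-defined single-variable maximization per family. The candidate lists coincide: the interior critical points of the paper's joint problem are exactly the critical points of your $\sqrt{\alpha^2+\beta^2}$ branch (there $\tan\theta=\beta/\alpha\in(0,1)$), and your $|\alpha|$ and $\frac{\sqrt{2}}{2}|\alpha+\beta|$ branches reproduce the paper's $\theta=0$ and $\theta=\pi/4$ edges. What your route buys is real economy: on the middle branch the sector norm of $DP(x,y)$ equals $\|\nabla P(x,y)\|_2$, so the quantities $C_i$ and the critical parameter values already computed in Section 2 can be recycled verbatim, as you note. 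The price is the bookkeeping you flag yourself: the active branch moves with the parameter and with $\lambda$, so branch-switch points must be treated as additional endpoint candidates (legitimate, since Lemma \ref{lem_aux} is continuous across branches). The paper's route avoids piecewise formulas during the optimization but must solve the coupled critical-point system instead. Both routes funnel into the same comparison-of-candidates step, which your proposal defers as routine; that is acceptable as a plan, but be aware that the theorem's content is essentially those comparisons, and the thresholds $\frac{2\sqrt{2}-1}{7}$, $\sqrt{2}-1$, $2-\sqrt{2}$ in $\Psi$ only emerge once they are carried out.
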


\begin{proof}
In order to calculate $\Psi(x,y):=\sup\{\|DP(x,y)\|_{D(\frac{\pi}{4})}:\ \|P\|_{D({\pi \over 4})})\leq 1\}$, by the Krein-Milman approach, it suffices to calculate
    $$
    \sup \{\|D P(x,y) \|_{D({\pi \over 4})} \, : \, P \in \text{ext}(B_{D({\pi \over 4})})\}.
    $$
By symmetry, we may just study the polynomials of Lemma \ref{lem_ext} with positive sign. Let us start first with
$$
P_t(x,y)=tx^2+\left(4+t+4 \sqrt{1+t}\right)y^2-\left(2+2t+4 \sqrt{1+t} \right)xy.
$$
So we may write
$$
\nabla P_t(x,y)=\left(2tx-\left(2+2t+4 \sqrt{1+t}\right)y, \, 2 \left(4+t+4 \sqrt{1+t}\right)y-\left(2+2t+4\sqrt{1+t}\right)x \right),
$$
from which
$$
\begin{aligned}
\| D P_t(x,y) \|_{D({\pi \over 4})} &= \sup_{0 \leq \theta \leq {\pi \over 4}} \left|2 \left[ tx-\left(1+t+2\sqrt{1+t} \right)y \right] \cos \theta \right.\\
& \left. \quad + \, 2 \left[\left(4+t+4 \sqrt{1+t}\right)y-\left(1+t+2\sqrt{1+t} \right)x \right] \sin \theta \right| \\
&=2x \sup_{0 \leq \theta \leq {\pi \over 4}}|f_{\lambda}(t, \theta)|, \\
& \begin{aligned} \quad \text{for }f_{\lambda}(t, \theta)&=\left[t-\left(1+t+2\sqrt{1+t} \right) \lambda \right] \cos \theta \\
& \quad +\left[\left(4+t+4\sqrt{1+t} \right) \lambda-\left(1+t+2 \sqrt{1+t} \right) \right] \sin \theta, \end{aligned}
\end{aligned}
$$
where $\lambda={y \over x}, \, x \neq 0$ (the case $x=0$ is trivial, since the only point in $D({\pi \over 4})$ where $x=0$ is $(0,0)$, in which case $P_t(0,0)=\|DP_{t}(0,0)\|_{D\left(\frac{\pi}{4}\right)}=0$). \\
\noindent
We  need to calculate
$$
\sup_{-1 \leq t \leq 1} \| D P_t(x,y) \|_{D({\pi \over 4})}=2x \sup_{\substack{0 \leq \theta \leq {\pi \over 4} \\ -1 \leq t \leq 1}} |f_{\lambda}(t, \theta)|.
$$
Let us define $C_1= [-1,1] \times [0,{\pi \over 4}] $. We will analyze $5$ cases.

\medskip

(1) $(t, \theta) \in (-1,1) \times (0,{\pi \over 4})$.

\medskip

We are interested just in critical points. Hence,

\begin{equation} \label{eq1}
\begin{aligned}
{\partial f_{\lambda} \over \partial t}(t, \theta)&=\left[ \left(1+{2 \over \sqrt{1+t}} \right) \lambda-\left(1+{1 \over \sqrt{1+t}} \right) \right] \sin \theta \\
 & \quad + \left[1-\left(1+{1 \over \sqrt{1+t}} \right) \lambda \right] \cos \theta=0,
\end{aligned}
\end{equation}

\begin{equation} \label{eq2}
\begin{aligned}
{\partial f_{\lambda} \over \partial \theta}(t, \theta)&=\left[\left(1+t+2\sqrt{1+t} \right) \lambda-t\right] \sin \theta \\
& \quad + \left[ \left(4+t+4 \sqrt{1+t} \right) \lambda-\left(1+t+2 \sqrt{1+t} \right) \right] \cos \theta=0
\end{aligned}
\end{equation}

Equation \eqref{eq2} tells us that

\begin{equation} \label{eq3}
\sin \theta={\left(4+t+4 \sqrt{1+t} \right) \lambda-\left(1+t+2\sqrt{1+t} \right) \over t-\left(1+t+2\sqrt{1+t} \right) \lambda} \cos \theta.
\end{equation}

If we now plug \eqref{eq3} in equation \eqref{eq1}, we obtain

$$
\begin{aligned}
0 &= \left\{ \left[ 1-\left( 1 +{1 \over \sqrt{1+t}} \right) \lambda \right]+\left[\left(1+{2 \over \sqrt{1+t}} \right) \lambda-\left(1+{1 \over \sqrt{1+t}} \right) \right]\right. \\
& \left. \quad \times {\left(4+t+4\sqrt{1+t} \right) \lambda-\left(1+t+2 \sqrt{1+t} \right) \over t-\left(1+t+2 \sqrt{1+t} \right) \lambda} \right\} \cos \theta.
\end{aligned}
$$

Using that $0< \theta < {\pi \over 4}$, we can conclude

$$
\begin{aligned}
0 &=  \left[ 1-\left( 1 +{1 \over \sqrt{1+t}} \right) \lambda \right]+\left[\left(1+{2 \over \sqrt{1+t}} \right) \lambda-\left(1+{1 \over \sqrt{1+t}} \right) \right]  \\
&  \quad \times {\left(4+t+4\sqrt{1+t} \right) \lambda-\left(1+t+2 \sqrt{1+t} \right) \over t-\left(1+t+2 \sqrt{1+t} \right) \lambda}
\end{aligned}
$$
and thus
$$
\begin{aligned}
0&=\left[1-\left(1+{1 \over \sqrt{1+t}} \right) \lambda \right]\cdot \left[t-\left(1+t+2 \sqrt{1+t} \right) \lambda \right] \\
& \quad + \left[ \left(1+{2 \over \sqrt{1+t}} \right) \lambda-\left(1+{1 \over \sqrt{1+t}} \right) \right] \cdot \left[ \left( 4+t+4 \sqrt{1+t} \right) \lambda-\left(1+t+2 \sqrt{1+t} \right) \right] \\
&=t-\left(1+t+2 \sqrt{1+t} \right) \lambda-t\lambda+\left(1+t+2 \sqrt{1+t} \right) \lambda^2-{\lambda t \over \sqrt{1+t}} \\
& \quad  + {\lambda^2 \over \sqrt{1+t}} \left(1+t+2\sqrt{1+t} \right) + \left(1+{2 \over \sqrt{1+t}} \right) \left(4+t+4\sqrt{1+t} \right) \lambda^2 \\
& \quad -\left( 1+{2 \over \sqrt{1+t}} \right) \left(1+t+2 \sqrt{1+t} \right) \lambda - \left(1+{1 \over \sqrt{1+t}} \right) \left(4+t+4\sqrt{1+t} \right) \lambda \\
 & \quad + \left(1+{1 \over \sqrt{1+t}} \right) \left(1+t+2 \sqrt{1+t} \right) \\
&=t\left(1-2\lambda+2\lambda^2-2\lambda+1\right)+  \left(-2\lambda+2\lambda^2+4\lambda^2-2\lambda-4\lambda+2\right)\sqrt{1+t} \\
& \quad +{t \over \sqrt{1+t}}\left(-\lambda+\lambda^2+2\lambda^2-2\lambda-\lambda+1\right)+{1 \over \sqrt{1+t}}\left(\lambda^2+8\lambda^2-2\lambda-4\lambda+1\right) \\
 & \quad + \left(-\lambda+\lambda^2+2\lambda^2+4 \lambda^2-\lambda-4\lambda+1+2+8\lambda^2-8\lambda\right) \\
&=2t(\lambda-1)^2+6\sqrt{1+t}(\lambda-1) \left( \lambda-{1 \over 3}\right)+3{t \over \sqrt{1+t}}(\lambda-1)\left(\lambda-{1 \over 3} \right) \\
& \quad + {1 \over \sqrt{1+t}}(3\lambda-1)^2+15\left(\lambda-\frac{1}{3}\right) \left(\lambda-{3 \over 5} \right).
\end{aligned}
$$
Working with this last expression, we get
\begin{eqnarray*}
0 &=& 2t\sqrt{1+t}(\lambda - 1)^2+6(1+t)(\lambda-1)\left(\lambda-{1 \over 3} \right)+3t(\lambda-1) \left(\lambda-{1 \over 3} \right) \\
& &+ (3 \lambda-1)^2 + 15 \sqrt{1+t}\left(\lambda-\frac{1}{3}\right) \left( \lambda-{3 \over 5} \right)
\end{eqnarray*}
and hence, rearranging terms,
\begin{equation} \label{eq4}
\sqrt{1+t} \left[15\left(\lambda-\frac{1}{3}\right) \left( \lambda-{3 \over 5} \right)+2t(\lambda-1)^2 \right]=-9t(\lambda-1) \left( \lambda-{1 \over 3} \right)-15 \left( \lambda-{1 \over 3} \right)\left(\lambda-{3 \over 5}\right).
\end{equation}
If $\lambda=1$, we obtain
$$
\sqrt{1+t}+1=0
$$
and so, in particular, we have $\lambda\neq1$. Equation \eqref{eq4} has two solutions,
$$
t_1(\lambda)={-1+2\lambda+3\lambda^2 \over (\lambda-1)^2} \quad \text{and} \quad t_2(\lambda)={5 \lambda^2+2\lambda-3 \over 4(\lambda-1)^2}.
$$
Using equation \eqref{eq1}, we may see
$$
\tan \theta={\left(1+{1 \over \sqrt{1+t}} \right)\lambda-1 \over \left(1+{2 \over \sqrt{1+t}} \right)\lambda-\left(1+{1 \over \sqrt{1+t}} \right)}.
$$
In particular, evaluating in $t_1(\lambda)$ we obtain
$$
\tan\theta_{1}=\frac{\left(  1+\frac{1-\lambda}{2\lambda}\right)  \lambda
-1}{\left(  1+\frac{1-\lambda}{\lambda}\right)  \lambda-\left(  1+\frac
{1-\lambda}{2\lambda}\right)  }=\lambda,
$$
in which case we have
$$
D_{1,1}(\lambda):=\left|f_{\lambda}(t_1,\theta_1)\right|=\left|-\sqrt{1+\lambda^2}\right|=\sqrt{1+\lambda^2}.
$$

Regarding $t_{2}\left(  \lambda\right)  $, we obtain
$$
\tan\theta_{2}=\frac{\left(  1+\sqrt{\frac{4\left(  \lambda-1\right)  ^{2}%
}{\left(  3\lambda-1\right)  ^{2}}}\right)  \lambda-1}{\left(  1+2\sqrt
{\frac{4\left(  \lambda-1\right)  ^{2}}{\left(  3\lambda-1\right)  ^{2}}%
}\right)  \lambda-\left(  1+\sqrt{\frac{4\left(  \lambda-1\right)  ^{2}%
}{\left(  3\lambda-1\right)  ^{2}}}\right)  }.
$$
Since $\theta_2 \in \left(0,{\pi \over 4}\right)$, we need to guarantee $0<\tan \theta_2 < 1$, and for this we need  $0<\lambda<{1 \over 5}$.
Therefore $$\tan\theta_{2}=\frac{5\lambda-1}{7\lambda-3}$$ and in this case,
$$
\begin{aligned}
D_{1,2}(\lambda)&:=\left|f_{\lambda}(t_2,\theta_2)\right|\\
&=\left|\left[{5\lambda^2+2\lambda-3 \over 4(\lambda-1)^2}-\left({9\lambda^2-6\lambda+1 \over 4(\lambda-1)^2}+{3\lambda-1 \over \lambda-1}\right)\lambda\right]{3-7\lambda \over \sqrt{74\lambda^2-52\lambda+10}}\right. \\
& \quad \left.+\left[\left(3+{9\lambda^2-6\lambda+1 \over 4(\lambda-1)^2}+{6\lambda-2 \over \lambda-1}\right)\lambda-\left({9\lambda^2-6\lambda+1 \over 4(\lambda-1)^2}+{3\lambda-1 \over \lambda-1}\right)\right]{1-5\lambda \over \sqrt{74\lambda^2-52\lambda+10}}\right| \\
&=\left|-{78\lambda^4-208\lambda^3+196\lambda^2-80\lambda+14 \over 4(\lambda-1)^2\sqrt{74\lambda^2-52\lambda+10}}\right| \\
&=\left|-{39\lambda^2-26\lambda+7 \over 2\sqrt{74\lambda^2-52\lambda+10}}\right| \\
&={39\lambda^2-26\lambda+7 \over 2\sqrt{74\lambda^2-52\lambda+10}}.
\end{aligned}
$$

\medskip

(2) $\theta=0, -1 \leq t \leq 1$.

\medskip

We have
$$
f_{\lambda}(t,0)=t-\left(1+t+2\sqrt{1+t} \right)\lambda.
$$
Then,
$$
\begin{aligned}
&f_{\lambda}(-1,0)=-1, \\
&f_{\lambda}(1,0)=1-2\left( 1+\sqrt{2} \right) \lambda,
\end{aligned}
$$
and hence
$$
|f_{\lambda}(1,0)|=\left\{
\begin{array}{ll}
1-2(1+\sqrt{2})\lambda & \text{if }0 \leq \lambda <{\sqrt{2}-1 \over 2}, \\
2\left(1+\sqrt{2} \right) \lambda-1 & \text{if }{\sqrt{2}-1 \over 2} \leq \lambda \leq 1.
\end{array}\right.
$$
Working now on $(-1,1)$, since
$$
f_{\lambda}'(t,0)=1-\left(1+{1 \over \sqrt{1+t}} \right)\lambda,
$$
the critical point of $f_{\lambda}(t,0)$ is
$$
t ={\lambda^2 \over (1-\lambda)^2}-1.
$$
Recall that we need to make sure that $-1<t<1$. Therefore, in this case we also need to ask
$$
\lambda <{\sqrt{2} \over 1+\sqrt{2}}=2-\sqrt{2}.
$$

Plugging the critical point of $f_{\lambda}(t,0)$ into $f_{\lambda}(t,0)$, we obtain
$$
f_{\lambda} \left({\lambda^2 \over (\lambda-1)^2}-1,0 \right)={\lambda^2 \over (\lambda-1)^2}-1-\left[{\lambda^2 \over (\lambda-1)^2}+{2 \lambda \over 1-\lambda} \right] \lambda={\lambda^2 \over \lambda-1}-1,
$$
and hence
$$
\left| f_{\lambda} \left({\lambda^2 \over (\lambda-1)^2}-1,0 \right)\right|=1+{\lambda^2 \over 1-\lambda}.
$$

\begin{itemize}
\item Assume first $0 \leq \lambda <{\sqrt{2}-1 \over 2}$. Then,
$$
\sup_{-1 \leq t \leq 1}|f_{\lambda}(t,0)|=\max\left\{1, \, 1-2\left(1+\sqrt{2} \right) \lambda, \, 1+{\lambda^2 \over 1-\lambda} \right\}=1+{\lambda^2 \over 1-\lambda}.
$$
\item Assume now ${\sqrt{2}-1 \over 2} \leq \lambda < 2-\sqrt{2}$. Then,
$$
\sup_{-1 \leq t \leq 1}|f_{\lambda}(t,0)|=\max\left\{1, \, 2 \left(1+\sqrt{2} \right) \lambda-1, \, 1+{\lambda^2 \over 1-\lambda} \right\}=1+{\lambda^2 \over 1-\lambda}.
$$
\item Assume finally $2-\sqrt{2} \leq \lambda \leq 1$. Then,
$$
\sup_{-1 \leq t \leq 1}|f_{\lambda}(t,0)|=\max\left\{1, \, 2 \left(1+\sqrt{2} \right) \lambda-1 \right\}=2 \left(1+\sqrt{2} \right) \lambda-1.
$$
\end{itemize}
So, in conclusion,
$$
\begin{array}{lcl}
\sup\limits_{-1 \leq t \leq 1}|f_{\lambda}(t,0)| & =  & \left\{ \begin{array}{ll} 1+{\lambda^2 \over 1-\lambda} & \text{if }0 \leq \lambda < 2-\sqrt{2}, \vspace{0.2cm}\\
\left(2+2\sqrt{2} \right) \lambda-1 & \text{if }2-\sqrt{2} \leq \lambda \leq 1, \end{array} \right. \vspace{0.2cm} \\
                                          & =: & \left\{ \begin{array}{ll} D_{2,1}(\lambda) & \text{if }0 \leq \lambda < 2-\sqrt{2}, \vspace{0.2cm} \\
D_{2,2}(\lambda) & \text{if }2-\sqrt{2} \leq \lambda \leq 1. \end{array} \right.
\end{array}
$$

\medskip

(3) $\theta={\pi \over 4}$ and $-1 \leq t \leq 1$.

\medskip

We have
$$
\begin{aligned}
f_{\lambda}\left(t,{\pi \over 4}\right)&={\sqrt{2} \over 2} \left[t-\left(1+t+2\sqrt{1+t} \right) \lambda+\left(4+t+4\sqrt{1+t} \right) \lambda-\left(1+t+2\sqrt{1+t} \right) \right] \\
&={\sqrt{2} \over 2}\left[ \left(3+2\sqrt{1+t} \right) \lambda-\left(1+2\sqrt{1+t} \right) \right].
\end{aligned}
$$
Again, we have
$$
\begin{aligned}
f_{\lambda}\left(-1,{\pi \over 4}\right)&=\frac{\sqrt{2}}{2}\left(3\lambda-1\right), \\
f_{\lambda}\left(1,{\pi \over 4}\right)&=\frac{\sqrt{2}}{2}\left[\left(3+2\sqrt{2} \right) \lambda-\left(1+2 \sqrt{2} \right)\right], \\
f_{\lambda}'\left(t,{\pi \over 4}\right)&=\frac{\sqrt{2}}{2}\left[{\lambda \over \sqrt{1+t}}-{1 \over \sqrt{1+t}}\right].
\end{aligned}
$$
and $f_{\lambda}'(t,{\pi \over 4})=0$ implies $\lambda=1$ (in which case $f_{\lambda}(t,{\pi \over 4})=\sqrt{2}$ for every $t$).

\begin{itemize}
\item Assume first $0 \leq \lambda < {1 \over 3}$. Then,
$$
\begin{aligned}
\sup_{-1 \leq t \leq 1} |f_{\lambda}\left(t,{\pi \over 4}\right)| &={\sqrt{2} \over 2} \max\left\{ \left(1+2\sqrt{2} \right)-\left(3+2\sqrt{2} \right) \lambda, \, 1-3 \lambda \right\} \\
&={\sqrt{2} \over 2} \left[  \left(1+2\sqrt{2} \right)-\left(3+2\sqrt{2} \right) \lambda \right]
\end{aligned}
$$
\item Assume now ${1 \over 3} \leq \lambda < 4\sqrt{2}-5$. Then,
$$
\begin{aligned}
\sup_{-1 \leq t \leq 1} |f_{\lambda}\left(t,{\pi \over 4}\right)| &={\sqrt{2} \over 2} \max\left\{ \left(1+2\sqrt{2} \right)-\left(3+2\sqrt{2} \right) \lambda, \, 3 \lambda - 1 \right\} \\
&=\left\{ \begin{array}{ll} {\sqrt{2} \over 2} \left[ \left(1+2\sqrt{2} \right)-\left(3+2\sqrt{2} \right) \lambda \right] & \text{if }{1 \over 3} \leq \lambda <{2 \sqrt{2}+1 \over 7}, \\
 {\sqrt{2} \over 2}(3 \lambda-1) & \text{if }{2 \sqrt{2}+1 \over 7} \leq \lambda < 4 \sqrt{2}-5.
\end{array} \right.
\end{aligned}
$$
\item Assume finally $4\sqrt{2}-5 \leq \lambda \leq 1$. Then,
$$
\sup_{-1 \leq t \leq 1}|f_{\lambda}\left(t,{\pi \over 4}\right)|={\sqrt{2} \over 2} \max \left\{3\lambda-1, \, \left(3+2\sqrt{2} \right)\lambda-\left(1+2\sqrt{2} \right) \right\}={\sqrt{2} \over 2}(3\lambda-1).
$$
\end{itemize}
Hence, we can say that
$$
\begin{array}{lcl}
\sup_{-1 \leq t \leq 1}|f_{\lambda}\left(t,{\pi \over 4}\right)| & =  & \left\{\begin{array}{ll}{\sqrt{2} \over 2}\left[1+2\sqrt{2}-\left(3+2\sqrt{2}\right)\lambda \right] & \text{if }0 \leq \lambda<{2 \sqrt{2} +1 \over 7} \vspace{0.2cm} \\
 {\sqrt{2} \over 2} \left(3\lambda-1 \right) & \text{if }{2 \sqrt{2} +1 \over 7} \leq \lambda \leq 1.
\end{array} \right. \vspace{0.2cm} \\
                                                     & =: &\left\{\begin{array}{ll}D_{3,1}(\lambda) & \text{if }0 \leq \lambda<{2 \sqrt{2} +1 \over 7} \vspace{0.2cm} \\
D_{3,2}(\lambda) & \text{if }{2 \sqrt{2} +1 \over 7} \leq \lambda \leq 1.
\end{array} \right.
\end{array}
$$

\medskip

(4) $t=-1$, $0 \leq \theta \leq {\pi \over 4}$.

\medskip

Applying lemma \ref{lem_aux}, we obtain
$$
\begin{array}{lcl}
\sup\limits_{0\leq \theta \leq {\pi\over 4}}f_{\lambda}(-1,\theta) & = & \left \{ \begin{array}{ll}1 & \text{if }0 \leq \lambda <{1+\sqrt{2} \over 3}, \vspace{0.2cm} \\
{\sqrt{2} \over 2}(3\lambda-1) & \text{if }{1+\sqrt{2} \over 3} \leq \lambda \leq 1.
\end{array} \right. \vspace{0.2cm} \\
 & =: & \left \{ \begin{array}{ll}D_{4,1}(\lambda) & \text{if }0 \leq \lambda <{1+\sqrt{2} \over 3}, \vspace{0.2cm} \\
D_{4,2}(\lambda) & \text{if }{1+\sqrt{2} \over 3} \leq \lambda \leq 1.
\end{array} \right.
\end{array}
$$

\medskip

(5) $t=1, \, 0 \leq \theta \leq {\pi \over 4}$.

\medskip

We use again lemma \ref{lem_aux}, with $a=1-\left(2+2\sqrt{2} \right)\lambda$ and $b=\left(5+4\sqrt{2}\right) \lambda-\left(2+2\sqrt{2} \right)$. Through standard calculations, we see that $\frac{b}{a}<0$ if and only if $\lambda\in \left[0,\frac{\sqrt{2}-1}{2}\right)\cup\left(\frac{6-2\sqrt{2}}{7},1\right]$ and $\frac{b}{a}>1$ if and only if $\frac{\sqrt{2}-1}{2}<\lambda<\frac{3+4\sqrt{2}}{23}$. Therefore,
$$
\begin{array}{l}
\sup\limits_{0 \leq \theta \leq {\pi \over 4}}| f_{\lambda}(1, \theta)| \vspace{0.2cm} \\
=\left\{
\begin{array}{ll}
\max \left\{ \left\vert 1-\left( 2+2\sqrt{2}\right) \lambda \right\vert ,%
\frac{\sqrt{2}}{2}\left\vert \left( 3+2\sqrt{2}\right) \lambda -\left( 1+2%
\sqrt{2}\right) \right\vert \right\} & \text{ if } 0\leq \lambda < \frac{3+4\sqrt{2}}{23},  \\
\sqrt{\left( 1-\left( 2+2\sqrt{2}\right) \lambda \right) ^{2}+\left( \left(
5+4\sqrt{2}\right) \lambda -\left( 2+2\sqrt{2}\right) \right) ^{2}} & \text{ if
} \frac{3+4\sqrt{2}}{23}\leq\lambda<\frac{6-2\sqrt{2}}{7},  \\
\max \left\{ \left\vert 1-\left( 2+2\sqrt{2}\right) \lambda \right\vert ,
\frac{\sqrt{2}}{2}\left\vert \left( 3+2\sqrt{2}\right) \lambda -\left( 1+2%
\sqrt{2}\right) \right\vert \right\} & \text{ if } \frac{6-2\sqrt{2}}{7}\leq\lambda\leq 1.
\end{array}
\right.
\end{array}
$$
Since $0\leq\lambda<\sqrt{2}-1$ implies $\left\vert 1-\left( 2+2\sqrt{2}\right) \lambda \right\vert <
\frac{\sqrt{2}}{2}\left\vert \left( 3+2\sqrt{2}\right) \lambda -\left( 1+2%
\sqrt{2}\right) \right\vert$, it follows that
$$
\begin{array}{l}
\sup\limits_{0 \leq \theta \leq {\pi \over 4}}| f_{\lambda}(1,\theta)| \vspace{0.2cm} \\
=\left\{
\begin{array}{ll}
\frac{\sqrt{2}}{2}\left\vert \left( 3+2\sqrt{2}\right) \lambda -\left( 1+2%
\sqrt{2}\right) \right\vert & \text{ if } 0\leq\lambda<\frac{3+4\sqrt{2}%
}{23} \\
\sqrt{48\sqrt{2}\lambda^{2}-56\lambda+69\lambda^{2}-40\sqrt{2}\lambda
+8\sqrt{2}+13} & \text{ if
}\frac{3+4\sqrt{2}}{23}\leq\lambda<\frac{6-2\sqrt{2}}{7} \\
\left\vert 1-\left( 2+2\sqrt{2}\right) \lambda \right\vert & \text{ if }%
\frac{6-2\sqrt{2}}{7}\leq\lambda\leq 1
\end{array}
\right. \vspace{0.2cm} \\
=\left\{
\begin{array}{ll}
\frac{\sqrt{2}}{2}\left[  1+2\sqrt{2} -\left( 3+2\sqrt{2}%
\right) \lambda \right] & \text{ if } 0\leq\lambda<\frac{3+4\sqrt{2}}{23%
}  \\
\sqrt{48\sqrt{2}\lambda^{2}-56\lambda+69\lambda^{2}-40\sqrt{2}\lambda
+8\sqrt{2}+13} & \text{ if
} \frac{3+4\sqrt{2}}{23}\leq\lambda<\frac{6-2\sqrt{2}}{7} \\
\left( 2+2\sqrt{2}\right) \lambda -1 & \text{ if } \frac{6-2%
\sqrt{2}}{7}\leq\lambda\leq 1.
\end{array}%
\right. \vspace{0.2cm} \\
=:\left\{
\begin{array}{ll}
D_{5,1}(\lambda) & \text{ if } 0\leq\lambda<\frac{3+4\sqrt{2}}{23%
}  \\
D_{5,2}(\lambda) & \text{ if
} \frac{3+4\sqrt{2}}{23}\leq\lambda<\frac{6-2\sqrt{2}}{7} \\
D_{5,3}(\lambda) & \text{ if } \frac{6-2%
\sqrt{2}}{7}\leq\lambda\leq 1.
\end{array}%
\right.
\end{array}
$$

\medskip

Since (see Figures \ref{graph5} and \ref{graph6})
$$
\begin{array}{l}
D_{1,1}(\lambda)\leq\left\{
\begin{array}{ll}
D_{2,1}(\lambda) & \text{if }0\leq\lambda<2-\sqrt{2}, \vspace{0.2cm} \\
D_{2,2}(\lambda) & \text{if }2-\sqrt{2}\leq\lambda\leq 1,
\end{array}\right. \vspace{0.2cm} \\
D_{1,2}(\lambda)\leq D_{3,1}(\lambda) \text{ for }0<\lambda<\frac{1}{5},
\end{array}
$$
we can rule out case (1). Since
$$
\begin{array}{ll}
D_{3,1}(\lambda)=D_{5,1}(\lambda) & \text{for }0\leq\lambda\leq\frac{3+4\sqrt{2}}{23},\vspace{0.2cm}\\
D_{3,2}(\lambda)=D_{4,2}(\lambda) & \text{for }\frac{1+\sqrt{2}}{3}\leq\lambda\leq1,
\end{array}
$$
we can directly rule out case (3). Since (see Figures \ref{graph5} and \ref{graph7})
$$
\begin{array}{l}
D_{4,1}(\lambda)=1\leq\left\{
\begin{array}{ll}
D_{2,1}(\lambda) & \text{if }0\leq\lambda<2-\sqrt{2}, \vspace{0.2cm} \\
D_{2,2}(\lambda) & \text{if }2-\sqrt{2}\leq\lambda<\frac{1+\sqrt{2}}{3},
\end{array}\right. \vspace{0.2cm} \\
D_{4,2}(\lambda)\leq D_{2,2} \text{ for }\frac{1+\sqrt{2}}{3}\leq\lambda\leq1,
\end{array}
$$
we can rule out case (4). Finally, since (see Figure \ref{graph8})
$$
\begin{array}{ll}
D_{5,2}(\lambda)\leq D_{2,1}(\lambda) & \text{for }\frac{3+4\sqrt{2}}{23}\leq\lambda<\frac{6-2\sqrt{2}}{7}, \vspace{0.2cm} \\
D_{5,3}(\lambda)=D_{2,2}(\lambda) & \text{for }2-\sqrt{2}\leq\lambda\leq 1,
\end{array}
$$
we can rule out the expressions $D_{5,2}(\lambda)$ and $D_{5,3}(\lambda)$ of case (5).

Thus, putting all the above cases together, we may reach the conclusion
\begin{eqnarray*}
\sup\limits_{(t,\theta) \in C_1}|f_{\lambda}(t,\theta)| & = & \left\{\begin{array}{ll} D_{5,1}(\lambda) & \text{if }0 \leq \lambda < {(2-3\sqrt{2})\sqrt{4\sqrt{2}+7} +5\sqrt{2} +6 \over 14}, \\
D_{2,1}(\lambda) & \text{if }{(2-3\sqrt{2})\sqrt{4\sqrt{2}+7} +5\sqrt{2} +6 \over 14} \leq \lambda < 2-\sqrt{2}, \\
D_{2,2}(\lambda) & \text{if }2-\sqrt{2} \leq \lambda \leq 1,
\end{array} \right. \\
                                                        & = & \left\{\begin{array}{ll} {\sqrt{2} \over 2}\left[ \left(1+2\sqrt{2}\right)-\left(3+2\sqrt{2}\right)\lambda\right] & \text{if }0 \leq \lambda < {(2-3\sqrt{2})\sqrt{4\sqrt{2}+7} +5\sqrt{2} +6 \over 14}, \\
1+{\lambda^2 \over 1-\lambda} & \text{if }{(2-3\sqrt{2})\sqrt{4\sqrt{2}+7} +5\sqrt{2} +6 \over 14} \leq \lambda < 2-\sqrt{2}, \\
\left(2+2\sqrt{2}\right) \lambda-1 & \text{if }2-\sqrt{2} \leq \lambda \leq 1,
\end{array} \right.
\end{eqnarray*}
and hence
$$
\begin{array}{l}
\sup\limits_{-1 \leq t \leq 1} \| D P_t(x,y) \|_{D({\pi \over 4})}=2x \sup\limits_{(t,\theta)\in C_1} |f_{\lambda}(t, \theta)| \\
=\left\{
\begin{array}{ll}
\sqrt{2}\left[\left(1+2\sqrt{2}\right)x-\left(3+2\sqrt{2}\right)y\right] & \text{if }0 \leq y < {(2-3\sqrt{2})\sqrt{4\sqrt{2}+7} +5\sqrt{2} +6 \over 14}x, \\
2\left(x+{y^2 \over x-y}\right) & \text{if }{(2-3\sqrt{2})\sqrt{4\sqrt{2}+7} +5\sqrt{2} +6 \over 14}x \leq y < \left(2-\sqrt{2}\right)x, \\
4\left(1+\sqrt{2}\right)y-2x  & \text{if }\left(2-\sqrt{2}\right)x \leq y \leq x, \end{array} \right.
\end{array}
$$
assuming in every moment $x \neq 0$ (in order to illustrate the previous step, the reader can take a look at Figure \ref{graph9}).

\begin{figure}
\centering
\includegraphics[width=0.9\textwidth]{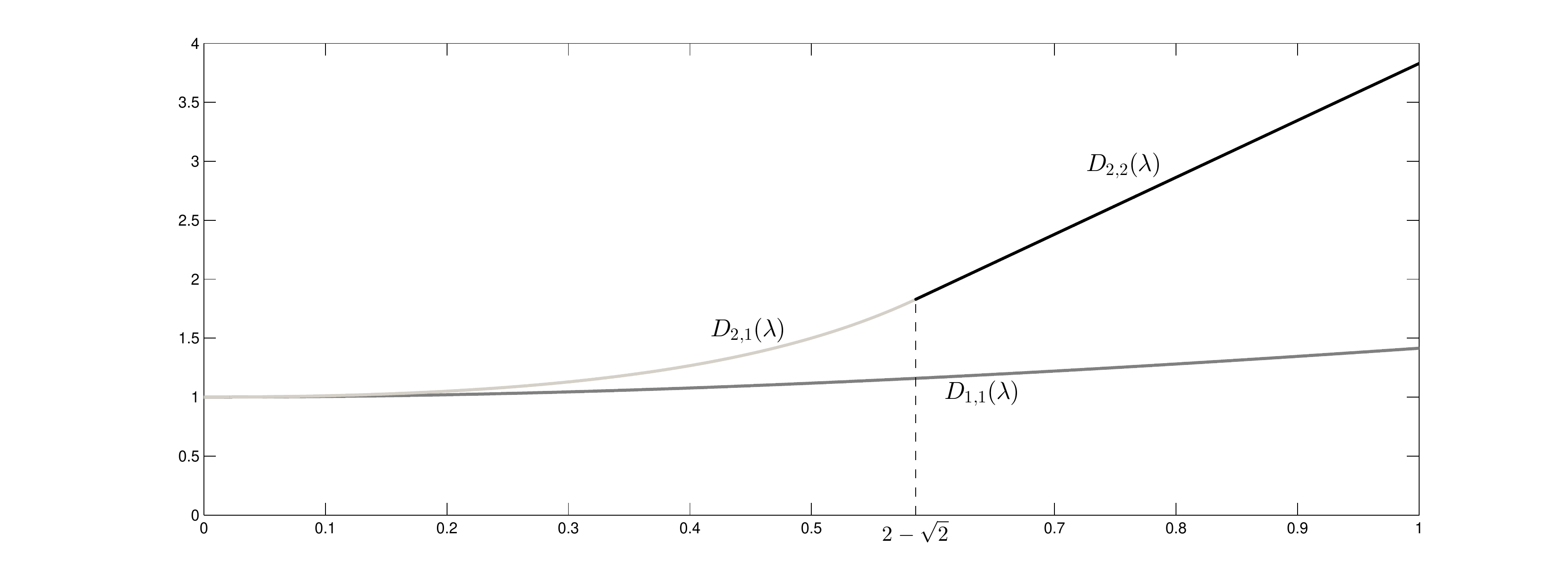}
\caption{Graphs of the mappings $D_{1,1}(\lambda)$, $D_{2,1}(\lambda)$ and $D_{2,2}(\lambda)$.}\label{graph5}
\end{figure}

\begin{figure}
\centering
\includegraphics[width=0.9\textwidth]{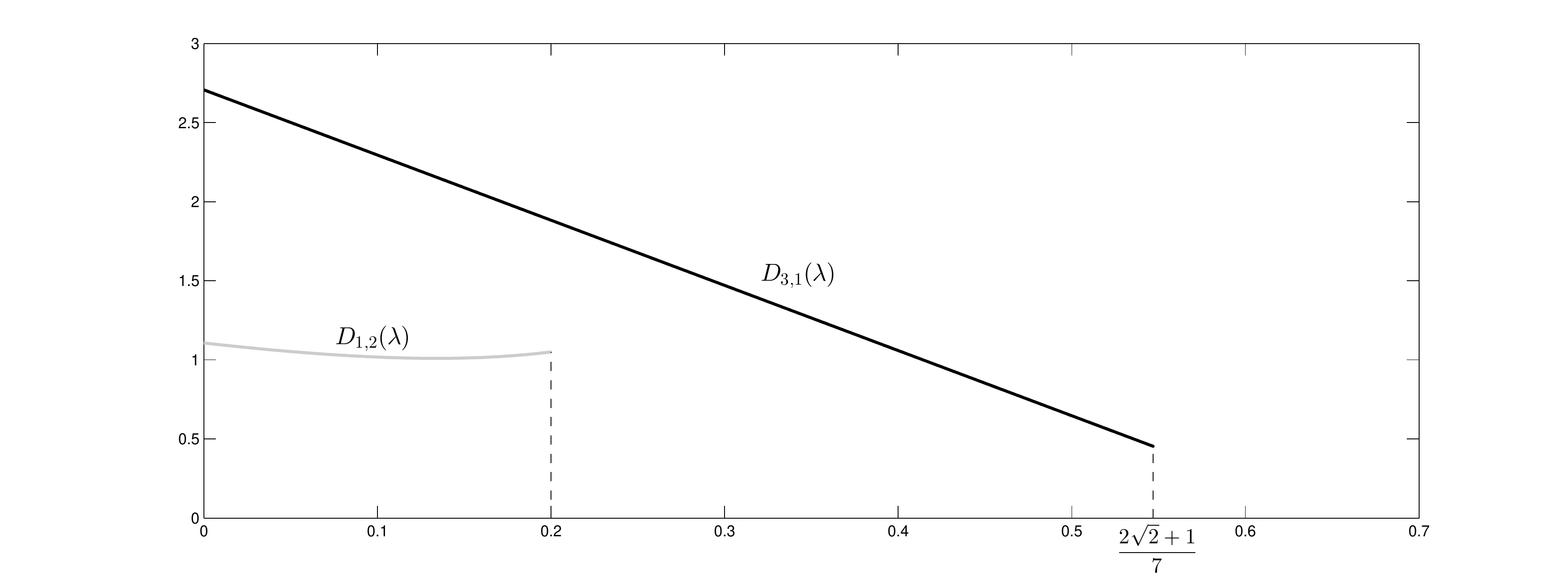}
\caption{Graphs of the mappings $D_{1,2}(\lambda)$ and $D_{3,1}(\lambda)$.}\label{graph6}
\end{figure}

\begin{figure}
\centering
\includegraphics[width=0.9\textwidth]{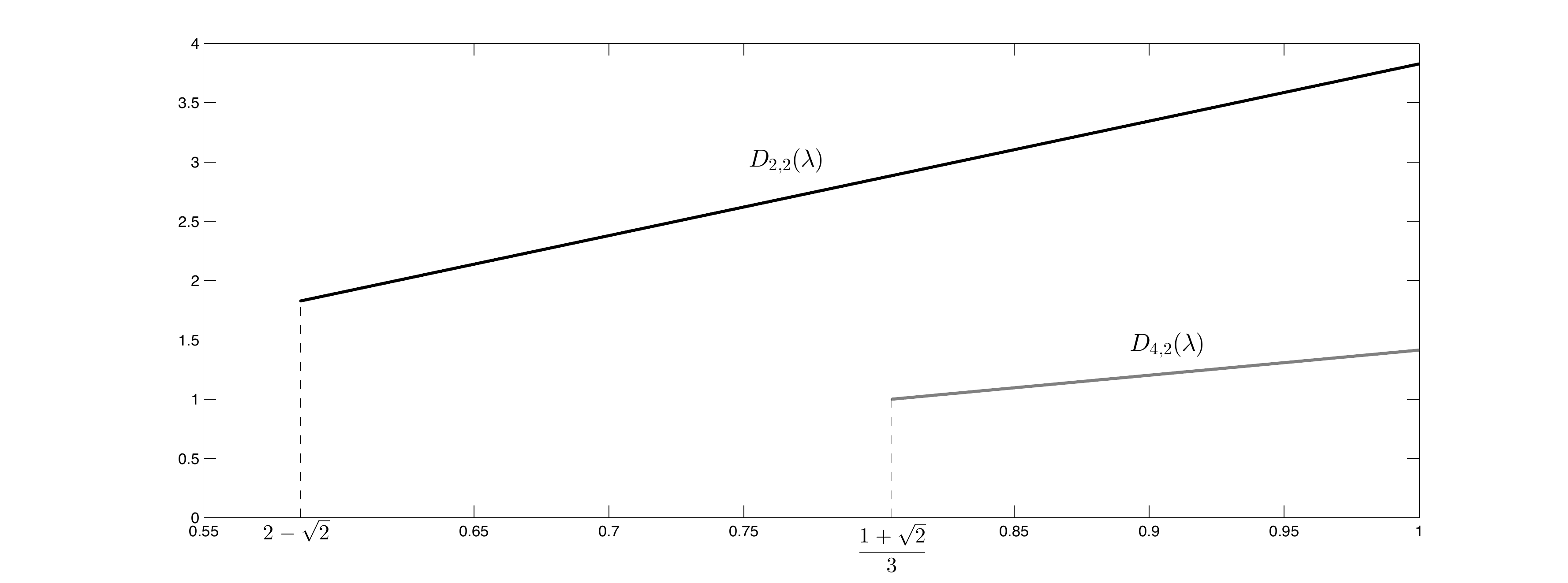}
\caption{Graphs of the mappings $D_{2,2}(\lambda)$ and $D_{4,2}(\lambda)$.}\label{graph7}
\end{figure}

\begin{figure}
\centering
\includegraphics[width=0.9\textwidth]{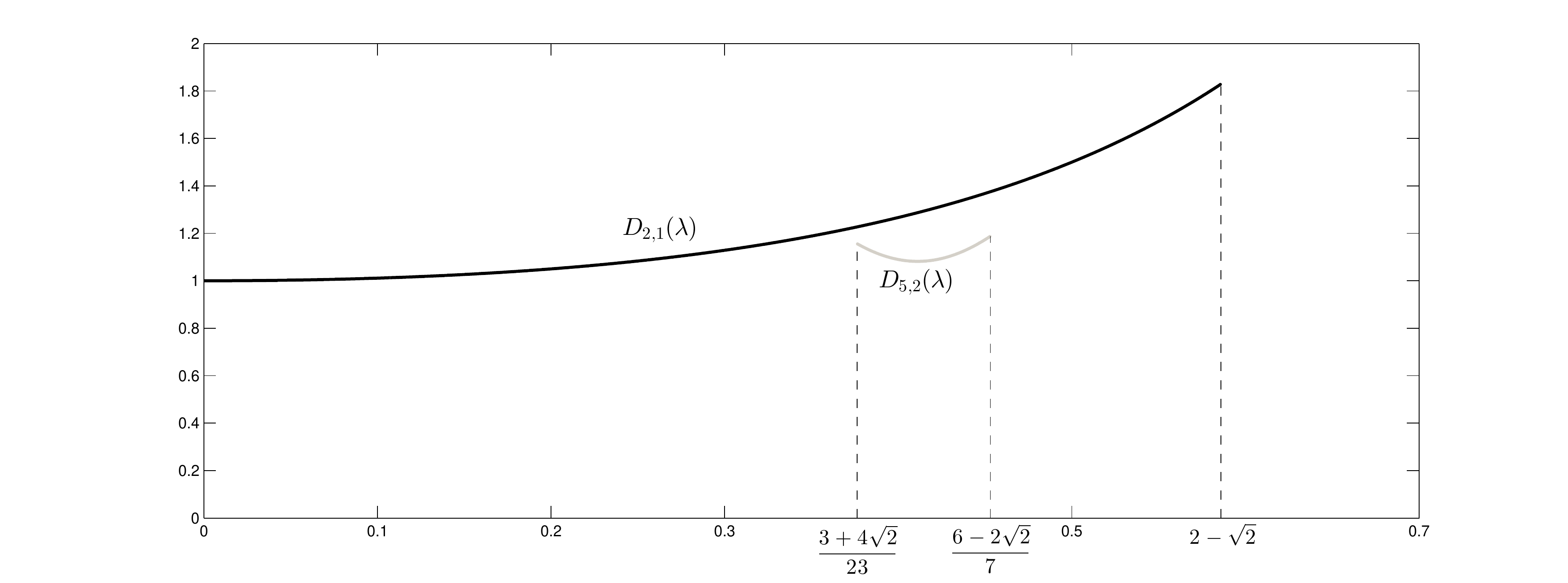}
\caption{Graphs of the mappings $D_{2,1}(\lambda)$ and $D_{5,2}(\lambda)$.}\label{graph8}
\end{figure}

\begin{figure}
\centering
\includegraphics[width=0.9\textwidth]{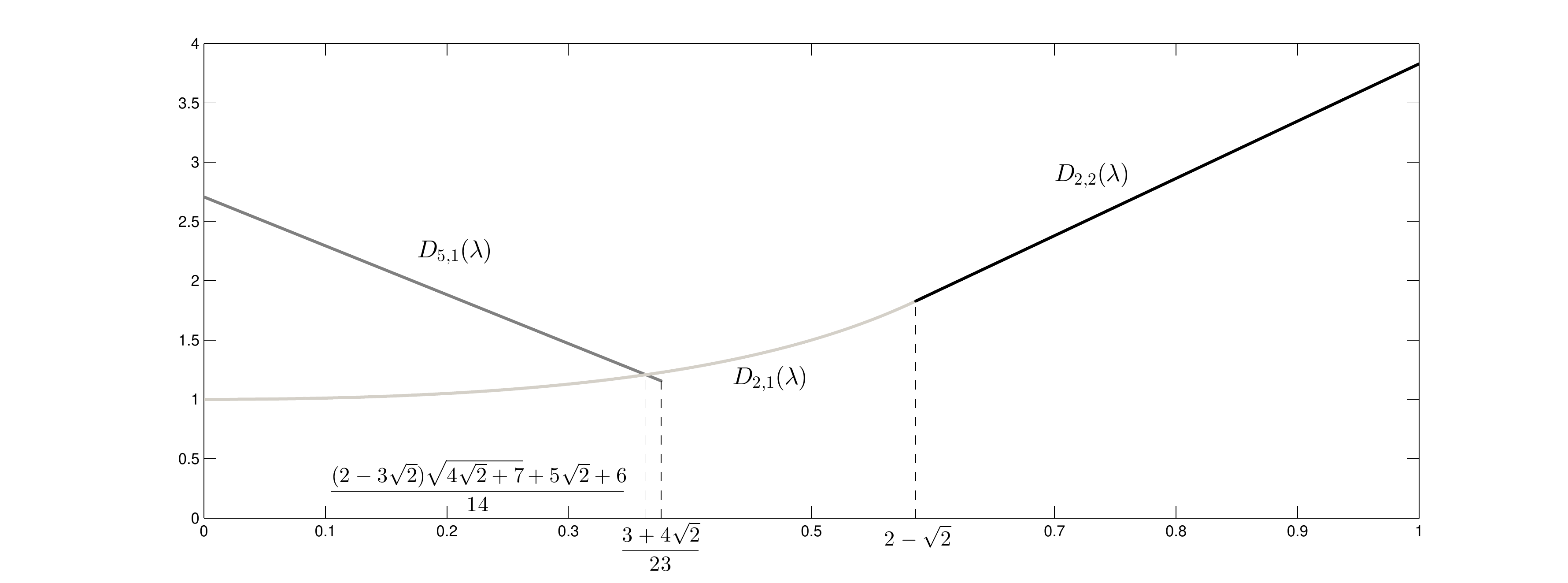}
\caption{Graphs of the mappings $D_{2,1}(\lambda)$, $D_{2,2}(\lambda)$ and $D_{5,1}(\lambda)$.}\label{graph9}
\end{figure}

\medskip

Let us deal now with the polynomials
$$
Q_s(x,y)=x^2+sy^2-2\sqrt{2(1+s)}xy, \quad 1 \leq s \leq 5+4\sqrt{2}.
$$
Then,
$$
\begin{aligned}
&\nabla Q_s(x,y)=\left(2x-2\sqrt{2(1+s)}y, \, 2sy-2\sqrt{2(1+s)}x\right), \\
&\|D Q_s(x,y) \|_{D({\pi \over 4})}=\sup_{0 \leq \theta \leq {\pi \over 4}} \left|2x\left[\left(1-\sqrt{2(1+s)}\lambda\right)\cos \theta+\left(s\lambda-\sqrt{2(1+s)}\right) \sin \theta \right] \right|,
\end{aligned}
$$
and thus
$$
\sup_{1 \leq s \leq 5+4\sqrt{2}}\|D Q_s(x,y) \|_{D({\pi \over 4})} =2x \sup_{(s,\theta) \in C_2}|g_{\lambda}(s,\theta)|,
$$
with
$$g_{\lambda}(s, \theta)=\left(1-\sqrt{2(1+s)}\lambda \right) \cos \theta+\left(s\lambda-\sqrt{2(1+s)}\right) \sin \theta
$$
and $C_2=[1,5+4\sqrt{2}] \times [0,{\pi \over 4}]$. Again, we have several cases:

\medskip

(6) $(s,\theta)\in (1,5+4\sqrt{2}) \times (0,{\pi \over 4})$.

\medskip

Let us first calculate the critical points of $g_{\lambda}$ over $C_2$.
$$
\begin{aligned}
& {\partial g_{\lambda} \over \partial s}(s_0,\theta_0)={-\lambda \over \sqrt{2(1+s_0)}} \cos \theta_0+\left( \lambda-{1 \over \sqrt{2(1+s_0)}} \right) \sin \theta_0, \\
& {\partial g_{\lambda} \over \partial \theta}(s_0,\theta_0)=\left(s_0 \lambda-\sqrt{2(1+s_0)}\right) \cos \theta_0-\left(1-\sqrt{2(1+s_0)}\lambda \right) \sin \theta_0,
\end{aligned}
$$
so, if $D g_{\lambda}(s_0,\theta_0)=0$, using the first expression, we obtain $\tan \theta_0={\lambda \over \sqrt{2(1+s_0)}\lambda-1}$, and, using the second one, we obtain $\tan \theta_0={s_0\lambda-\sqrt{2(1+s_0)} \over 1-\sqrt{2(1+s_0)}\lambda}$. \\
\noindent
Hence, we may say
$$
{s_0 \lambda-\sqrt{2(1+s_0)} \over 1-\sqrt{2(1+s_0)} \lambda}={\lambda \over \sqrt{2(1+s_0)}\lambda-1}
$$
and thus
$$
s_0={2-\lambda^2 \over \lambda^2}.
$$
Then, $\tan \theta_0=\lambda$ and also, if we want to guarantee that $1<s_0<5+4\sqrt{2}$, we need $\sqrt{2}-1< \lambda <1$. \\
\noindent
In that case, $\sin \theta_0={\lambda \over \sqrt{1+\lambda^2}}$ and $\cos \theta_0={1 \over \sqrt{1+\lambda^2}}$, and then
$$
g_{\lambda}(s_0, \theta_0)={-1 \over \sqrt{1+\lambda^2}}+{-\lambda^2 \over \sqrt{1+\lambda^2}}=-\sqrt{1+\lambda^2},
$$
so $$D_{6}(\lambda):=|g_{\lambda}(s_0,\theta_0)|=\sqrt{1+\lambda^2}.$$

\medskip

(7) $s=1, \, 0 \leq \theta \leq {\pi \over 4}$.

\medskip

Apply lemma \ref{lem_aux} with $a=1-2\lambda$ and $b=\lambda-2$. Using $0\leq \lambda \leq 1$, observe that we always have $b < 0$ and $b \leq a$. Also, $a<\left(1-\sqrt{2}\right)b$ if and only if $\lambda>{5-3\sqrt{2} \over 7}$. \\
\noindent
Putting everything together, we can say
\begin{eqnarray*}
\sup_{0 \leq \theta \leq {\pi \over 4}}|g_{\lambda}(1,\theta)| & =  &\begin{cases}
1-2\lambda & \text{if }0 \leq \lambda < {5-3\sqrt{2} \over 7}, \\
{\sqrt{2} \over 2}(1+\lambda) & \text{if }{5-3\sqrt{2} \over 7} \leq \lambda \leq 1,
\end{cases} \\
                                                               & =: & \begin{cases}
D_{7,1}(\lambda) & \text{if }0 \leq \lambda < {5-3\sqrt{2} \over 7}, \\
D_{7,2}(\lambda) & \text{if }{5-3\sqrt{2} \over 7} \leq \lambda \leq 1.
\end{cases}
\end{eqnarray*}

\medskip

(8) $s=5+4\sqrt{2}, \, 0 \leq \theta \leq {\pi \over 4}$.

\medskip

Apply again lemma \ref{lem_aux}, this time to $a=1-2\left( 1+\sqrt{2}\right)\lambda$ and $b=\left(5+4\sqrt{2}\right)\lambda-2\left(1+\sqrt{2}\right)$. As usual, we notice that $a<0$ if and only if $\lambda>\frac{\sqrt{2}-1}{2}$, $b<0$ if and only if $\lambda<\frac{6-2\sqrt{2}}{7}$ and $a<b$ if and only if $\lambda>\frac{3+4\sqrt{2}}{23}$. All together, we can say that, for ${3+4\sqrt{2} \over 23}< \lambda < {6-2\sqrt{2} \over 7}$, we have
$$
\sup_{0 \leq \theta \leq {\pi \over 4}}|g_{\lambda}(5+4\sqrt{2},\theta)| =\sqrt{a^2+b^2}=\sqrt{13+8\sqrt{2}-\left(56+40\sqrt{2}\right)\lambda+\left(69+48\sqrt{2}\right)\lambda^2}.
$$
Also, notice that, for any $\lambda \in [0,1],$ we are going to have $b<-\left(1+\sqrt{2}\right)a$ and $a<\left(1-\sqrt{2}\right)b$. Hence,
$$
\begin{array}{l}
\sup\limits_{0 \leq \theta \leq {\pi \over 4}}|g_{\lambda}(5+4\sqrt{2},\theta)| \vspace{0.2cm} \\
=\begin{cases}{\sqrt{2} \over 2}\left[\left(1+2\sqrt{2} \right) -\left(3+2\sqrt{2} \right)\lambda\right] & \text{if }0 \leq \lambda <{3+4\sqrt{2} \over 23}, \\
\sqrt{13+8\sqrt{2}-\left(56+40\sqrt{2}\right)\lambda+\left(69+48\sqrt{2}\right)\lambda^2} & \text{if }{3+4\sqrt{2} \over 23}\leq \lambda < {6-2\sqrt{2} \over 7},\\
2\left(1+\sqrt{2}\right)\lambda-1 & \text{if }{6-2\sqrt{2} \over 7} \leq \lambda \leq 1,
\end{cases} \vspace{0.2cm} \\
=:\begin{cases}
D_{8,1}(\lambda) & \text{if }0 \leq \lambda <{3+4\sqrt{2} \over 23}, \\
D_{8,2}(\lambda) & \text{if }{3+4\sqrt{2} \over 23}\leq \lambda < {6-2\sqrt{2} \over 7},\\
D_{8,3}(\lambda) & \text{if }{6-2\sqrt{2} \over 7} \leq \lambda \leq 1. \end{cases}
\end{array}
$$

\medskip

(9) $\theta=0, \, 1 \leq s \leq 5+4\sqrt{2}$.

\medskip

We have
$$
\begin{aligned}
&g_{\lambda}(s,0)=1-\sqrt{2(1+s)}\lambda, \\
& g_{\lambda}(1,0)=1-2\lambda, \\
& g_{\lambda}(5+4\sqrt{2},0)=1-2\left(1+\sqrt{2}\right)\lambda, \\
& g_{\lambda}'(s,0)=-{\lambda \over \sqrt{2(1+s)}} \neq 0 \text{ for }\lambda \neq 0.
\end{aligned}
$$
Then,
$$
\begin{aligned}
\sup_{1 \leq s \leq 5+4\sqrt{2}}|g_{\lambda}(s,0)|&=\max\left\{|1-2\lambda|, \, |1-2(1+\sqrt{2})\lambda| \right\} \\
&=\left\{ \begin{array}{ll}
1-2\lambda & \text{if }0 \leq \lambda < {2-\sqrt{2} \over 2}, \\
2\left(1+\sqrt{2}\right)\lambda-1 & \text{if }{2-\sqrt{2} \over 2} \leq \lambda \leq 1,
\end{array} \right. \\
&=:\left\{ \begin{array}{ll}
D_{9,1}(\lambda) & \text{if }0 \leq \lambda < {2-\sqrt{2} \over 2}, \\
D_{9,2}(\lambda) & \text{if }{2-\sqrt{2} \over 2} \leq \lambda \leq 1.
\end{array} \right.
\end{aligned}
$$

\medskip

(10) $\theta=\frac{\pi}{4}, \, 1 \leq s \leq 5+4\sqrt{2}$.

\medskip

We have $$g_{\lambda}\left(s, {\pi \over 4}\right)={\sqrt{2} \over 2}\left[1+s\lambda-\sqrt{2(1+s)}(1+\lambda)\right].$$ Then
$$
\begin{aligned}
& g_{\lambda}\left(1,{\pi \over 4}\right)=-\frac{\sqrt{2}}{2}(1+\lambda), \\
& g_{\lambda}\left(5+4\sqrt{2},{\pi \over 4}\right)={\sqrt{2} \over 2} \left[\left(3+2\sqrt{2} \right) \lambda-\left(1+2\sqrt{2}\right) \right], \\
& g_{\lambda}'\left(s_0,{\pi \over 4}\right)=0 \text{ if and only if }s_0={(1+\lambda)^2 \over 2\lambda^2}-1
\end{aligned}
$$
and since we need to ensure that $1 < s_0 < 5+4\sqrt{2}$, we need ${2\sqrt{2}-1 \over 7}<\lambda<1$. In that case,
$$
g_{\lambda}\left(s_0,{\pi \over 4}\right)=-{\sqrt{2}(1+3\lambda^2) \over 4\lambda}.
$$
Hence,
\begin{eqnarray*}
\sup_{1 \leq s \leq 5+4\sqrt{2}} \left|g_{\lambda}\left(s.{\pi \over 4}\right)\right| & =  & \left\{ \begin{array}{ll}
{\sqrt{2} \over 2}\left[\left(1+2\sqrt{2} \right) -\left(3+2\sqrt{2} \right)\lambda\right] & \text{if }0 \leq \lambda < \frac{2\sqrt{2} -1}{7}, \vspace{0.2cm} \\
{\sqrt{2}(1+3\lambda^2) \over 4\lambda} & \text{if }\frac{2\sqrt{2}-1}{7} \leq \lambda \leq 1,
\end{array} \right. \\
 & =:  & \left\{ \begin{array}{ll}
D_{10,1}(\lambda) & \text{if }0 \leq \lambda < \frac{2\sqrt{2} -1}{7}, \vspace{0.2cm} \\
D_{10,2}(\lambda) & \text{if }\frac{2\sqrt{2}-1}{7} \leq \lambda \leq 1.
\end{array} \right.
\end{eqnarray*}

\medskip

Since (the reader can take a look at Figure \ref{graph10})
$$
D_6(\lambda)\leq\left\{
\begin{array}{ll}
D_{8,2}(\lambda) & \text{if } \sqrt{2}-1<\lambda<\frac{6-2\sqrt{2}}{7}, \vspace{0.2cm} \\
D_{8,3}(\lambda) & \text{if } \frac{6-2\sqrt{2}}{7}\leq\lambda<1,
\end{array}\right.
$$
we can rule out case (6). Since (see Figures \ref{graph11} and \ref{graph12})
$$
\begin{array}{l}
D_{7,1}(\lambda)\leq D_{10,1}(\lambda) \text{ for } 0\leq\lambda<\frac{5-3\sqrt{2}}{7} \vspace{0.2cm} \\
D_{7,2}(\lambda)\leq\left\{
\begin{array}{ll}
D_{10,1}(\lambda) & \text{if }\frac{5-3\sqrt{2}}{7}\leq\lambda<\frac{2\sqrt{2}-1}{7}, \vspace{0.2cm} \\
D_{10,2}(\lambda) & \text{if }\frac{2\sqrt{2}-1}{7}\leq\lambda\leq 1,
\end{array}\right.
\end{array}
$$
we can rule out case (7). Since $$D_{8,1}(\lambda)=D_{10,1}(\lambda) \text{ for }0\leq\lambda<\frac{2\sqrt{2}-1}{7}$$
we can rule out the expression $D_{8,1}(\lambda)$ of case (8). Since
$$
\begin{array}{ll}
D_{9,1}(\lambda) = D_{7,1}(\lambda) & \text{for }0\leq\lambda<\frac{5-3\sqrt{2}}{7}, \vspace{0.2cm} \\
D_{9,2}(\lambda) = D_{8,3}(\lambda) & \text{for }\frac{6-2\sqrt{2}}{7}\leq\lambda\leq1,
\end{array}
$$
we can directly rule out case (9). Furthermore, since (see Figure \ref{graph13})
$$
\begin{array}{l}
D_{8,2}(\lambda)\leq D_{10,2}(\lambda) \text{ for }\frac{3+4\sqrt{2}}{23}\leq\lambda< \frac{6-2\sqrt{2}}{7}, \vspace{0.2cm} \\
D_{8,3}(\lambda)\leq D_{10,2}(\lambda) \text{ for }\frac{6-2\sqrt{2}}{7}\leq\lambda\leq\frac{(4\sqrt{2}-5)\sqrt{4\sqrt{2}+7}+8-5\sqrt{2}}{7},
\end{array}
$$
we can conclude that
\begin{eqnarray*}
\sup_{(s, \theta) \in C_2}|g_{\lambda}(s,\theta)| & = & \left\{ \begin{array}{ll}
D_{10,1}(\lambda) & \text{if }0 \leq \lambda<{2\sqrt{2}-1 \over 7}, \\
D_{10,2}(\lambda) & \text{if }{2\sqrt{2}-1 \over 7} \leq \lambda < \frac{(4\sqrt{2}-5)\sqrt{4\sqrt{2}+7}+8-5\sqrt{2}}{7}, \\
D_{8,3}(\lambda) & \text{if }\frac{(4\sqrt{2}-5)\sqrt{4\sqrt{2}+7}+8-5\sqrt{2}}{7} \leq \lambda \leq 1.
\end{array} \right. \\
                                                  & = & \left\{ \begin{array}{ll}
{\sqrt{2} \over 2}\left[1+2\sqrt{2}-\left(3+2\sqrt{2} \right) \lambda \right] & \text{if }0 \leq \lambda<{2\sqrt{2}-1 \over 7}, \\
{\sqrt{2}(1+3\lambda^2) \over 4\lambda} & \text{if }{2\sqrt{2}-1 \over 7} \leq \lambda < \frac{(4\sqrt{2}-5)\sqrt{4\sqrt{2}+7}+8-5\sqrt{2}}{7}, \\
2\left(1+\sqrt{2}\right) \lambda-1 & \text{if }\frac{(4\sqrt{2}-5)\sqrt{4\sqrt{2}+7}+8-5\sqrt{2}}{7} \leq \lambda \leq 1,
\end{array} \right.
\end{eqnarray*}
and hence
$$
\begin{array}{l}
\sup\limits_{1 \leq s \leq 5+4\sqrt{2}}\| D Q_s(x,y)\|_{D({\pi \over 4})} \vspace{0.2cm} \\
=\left\{ \begin{array}{ll}
\sqrt{2}\left[\left(1+2\sqrt{2}\right)x-\left(3+2\sqrt{2}\right)y\right] & \text{if }0 \leq y<{2\sqrt{2}-1 \over 7}x, \\
{\sqrt{2}(x^2+3y^2) \over 2y} & \text{if }{2\sqrt{2}-1 \over 7}x \leq y < \frac{(4\sqrt{2}-5)\sqrt{4\sqrt{2}+7}+8-5\sqrt{2}}{7}x, \\
4\left(1+\sqrt{2}\right)y-2x & \text{if }\frac{(4\sqrt{2}-5)\sqrt{4\sqrt{2}+7}+8-5\sqrt{2}}{7}x \leq y \leq x.
\end{array} \right.
\end{array}
$$

\begin{figure}
\centering
\includegraphics[width=0.9\textwidth]{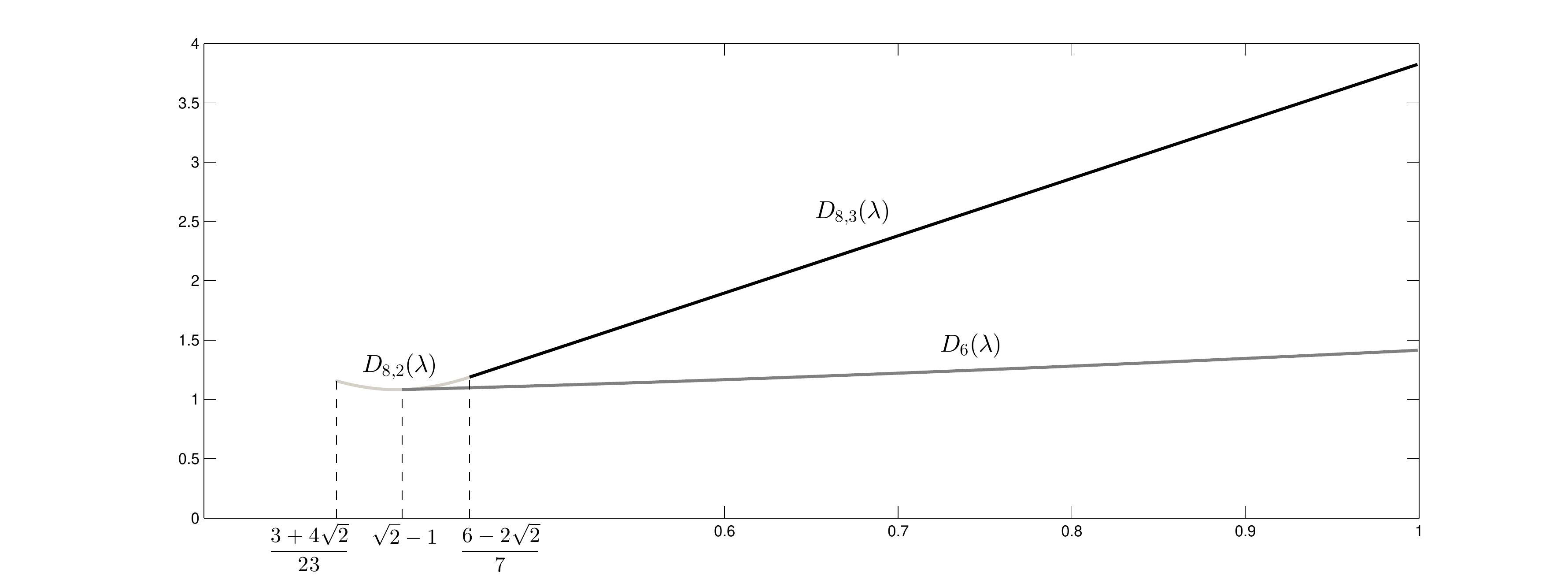}
\caption{Graphs of the mappings $D_{6}(\lambda)$, $D_{8,2}(\lambda)$ and $D_{8,3}(\lambda)$.}\label{graph10}
\end{figure}

\begin{figure}
\centering
\includegraphics[width=0.9\textwidth]{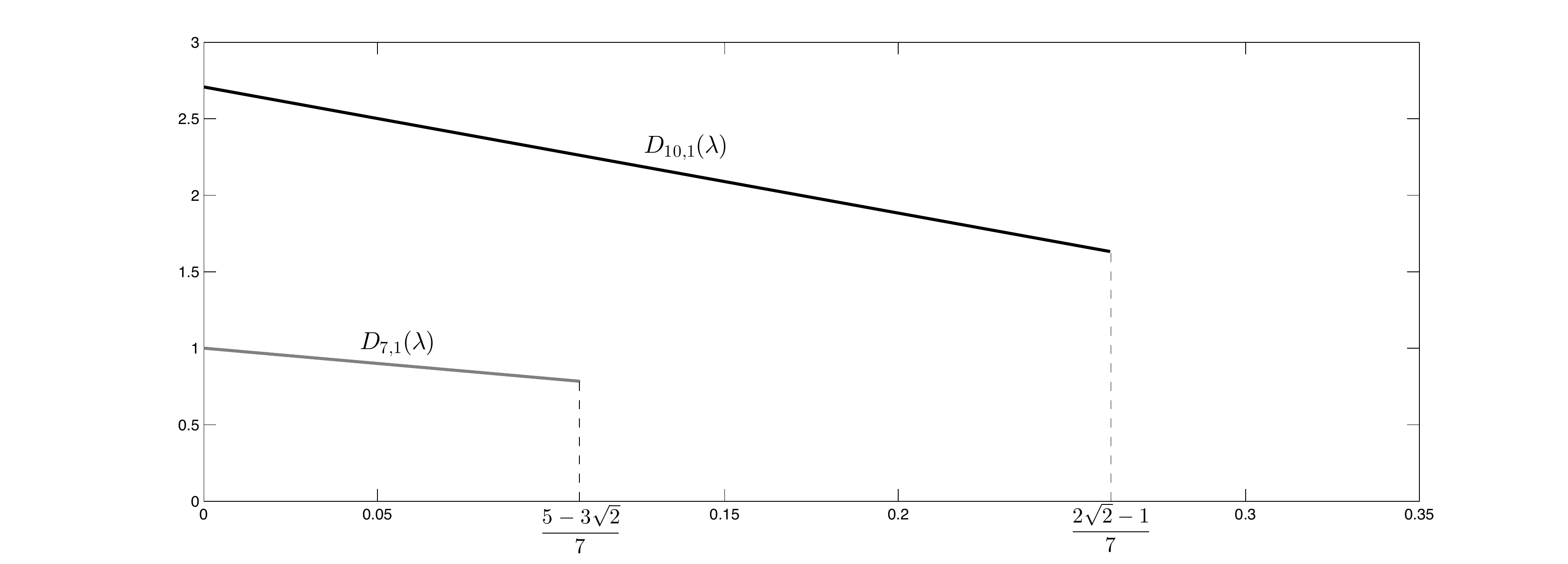}
\caption{Graphs of the mappings $D_{7,1}(\lambda)$ and $D_{10,1}(\lambda)$.}\label{graph11}
\end{figure}

\begin{figure}
\centering
\includegraphics[width=0.9\textwidth]{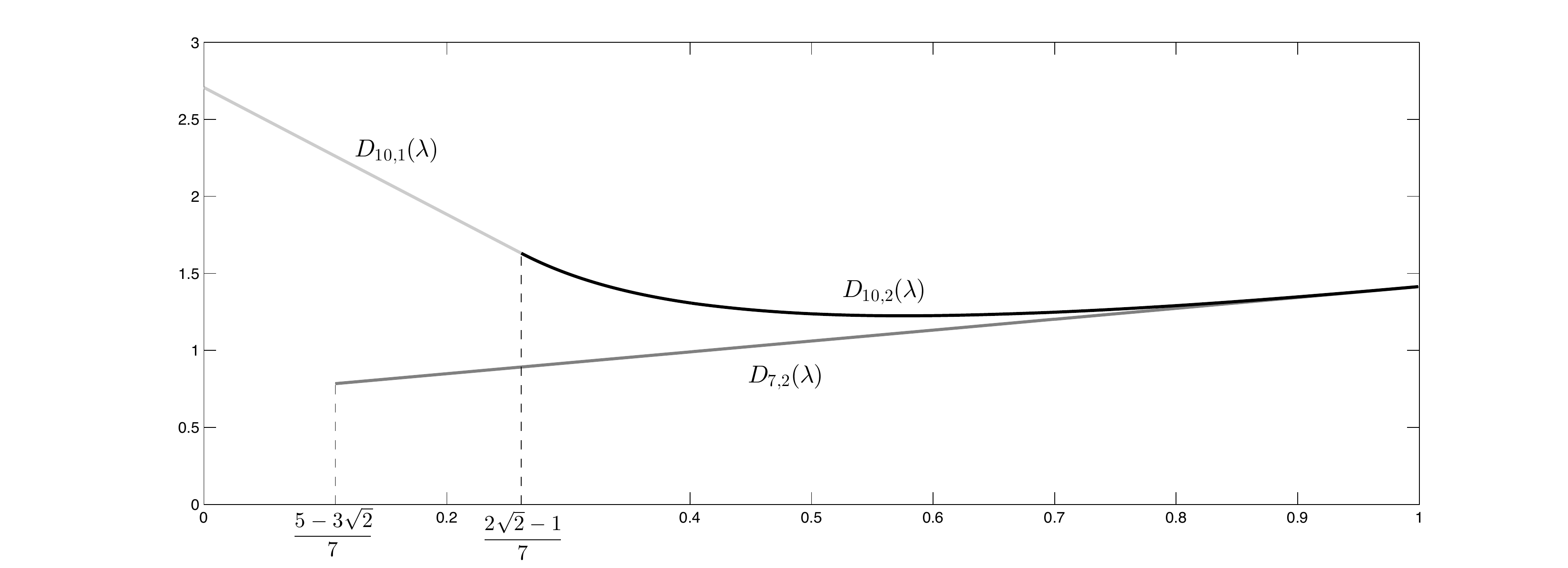}
\caption{Graphs of the mappings $D_{7,2}(\lambda)$, $D_{10,1}(\lambda)$ and $D_{10,2}(\lambda)$.}\label{graph12}
\end{figure}

\begin{figure}
\centering
\includegraphics[width=0.9\textwidth]{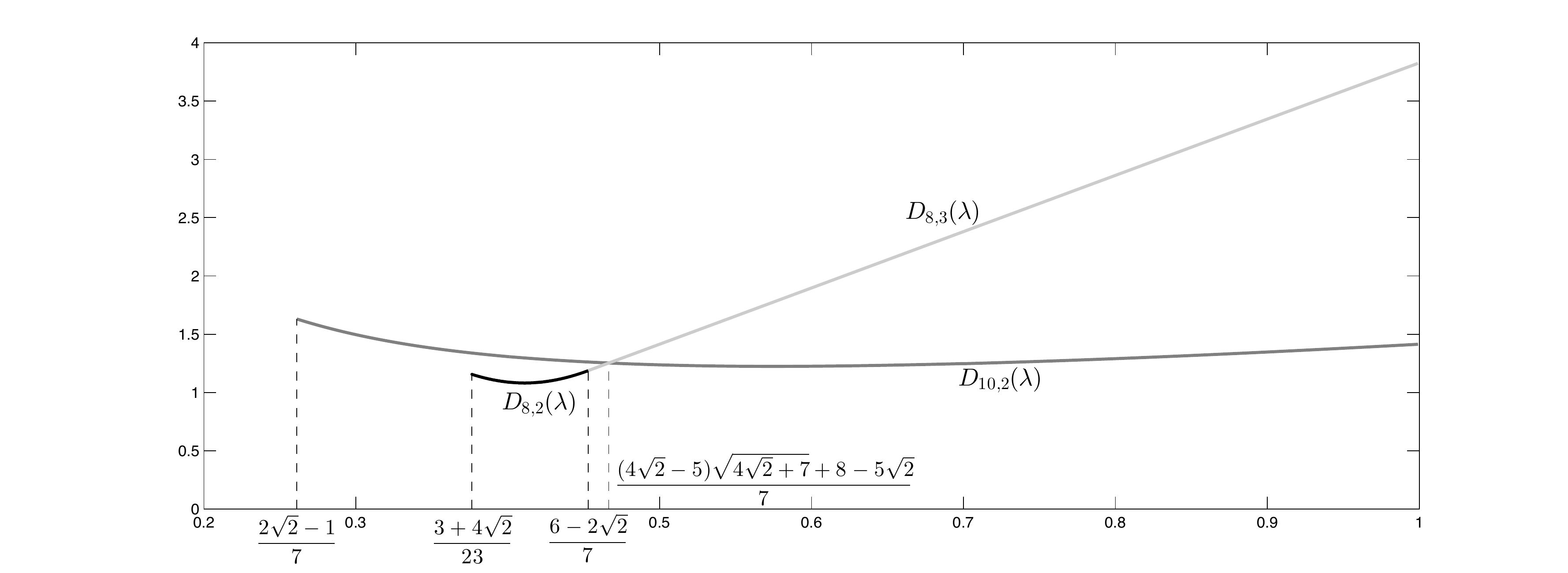}
\caption{Graphs of the mappings $D_{8,2}(\lambda)$, $D_{8,3}(\lambda)$ and $D_{10,2}(\lambda)$.}\label{graph13}
\end{figure}

Finally, if we compare the results obtained with $P_t$ and $Q_s$, since ${\sqrt{2}(1+3\lambda^2) \over 4\lambda}\geq 1+\frac{\lambda^2}{1-\lambda}$ whenever $\lambda\leq\sqrt{2}-1$, we obtain
$$
\Phi(x,y)=\left\{ \begin{array}{ll} \sqrt{2}\left[\left(1+2\sqrt{2}\right)x-\left(3+2\sqrt{2}\right)y\right] & \text{if }0 \leq y < {2\sqrt{2}-1 \over 7}x, \\
{\sqrt{2}(x^2+3y^2) \over 2y} & \text{if }{2\sqrt{2}-1 \over 7}x \leq y < \left(\sqrt{2}-1\right) x, \\
2\left(x+{y^2 \over x-y}\right) & \text{if }\left(\sqrt{2}-1\right) x \leq y < \left(2-\sqrt{2}\right)x, \\
4\left(1+\sqrt{2}\right)y-2x & \text{if }\left(2-\sqrt{2}\right)x \leq y \leq x.
\end{array} \right.
$$
\end{proof}

We can see that $\Phi(x,y) \leq 4+\sqrt{2}$, for all $(x,y)\in D\left(\frac{\pi}{4}\right)$. Furthermore, the maximum is attained by the polynomials
$$
P_1(x,y)=x^2+\left(5+4\sqrt{2} \right) y^2-\left(4+4\sqrt{2}\right)xy=Q_{5+4\sqrt{2}}(x,y).
$$

\begin{corollary}
Let $P\in{\mathcal P}\left(^2D\left(\frac{\pi}{4}\right)\right)$ and assume $L\in{\mathcal L}^s\left(^2D\left(\frac{\pi}{4}\right)\right)$ is the polar of $P$. Then
    $$
    \|L\|_{D\left(\frac{\pi}{4}\right)}\leq \left(2+\frac{\sqrt{2}}{2}\right)\|P\|_{D\left(\frac{\pi}{4}\right)}.
    $$
Moreover, equality is achieved for $P_1(x,y)=Q_{5+4\sqrt{2}}(x,y)=x^2+\left(5+4\sqrt{2} \right) y^2-\left(4+4\sqrt{2}\right)xy$.
Hence, the polarization constant of the polynomial space ${\mathcal P}\left(^2D\left(\frac{\pi}{4}\right)\right)$ is
$2+\frac{\sqrt{2}}{2}$.
\end{corollary}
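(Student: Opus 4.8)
The plan is to reduce the statement to the Bernstein-type inequality for the differential that was just proved. The starting point is the classical polarization identity for a $2$-homogeneous polynomial: if $P(v)=L(v,v)$ with $L$ symmetric and bilinear, then differentiating along $h$ gives $DP(v)(h)=2L(v,h)$, so that $L(v,h)=\tfrac12 DP(v)(h)$. Since every element of $D\left(\frac{\pi}{4}\right)$ is a unit vector, I would compute the norm of $L$ by splitting the double supremum:
\begin{align*}
\|L\|_{D\left(\frac{\pi}{4}\right)}
&=\sup_{u,v\in D\left(\frac{\pi}{4}\right)}|L(u,v)|
=\frac{1}{2}\sup_{u\in D\left(\frac{\pi}{4}\right)}\;\sup_{v\in D\left(\frac{\pi}{4}\right)}|DP(u)(v)|\\
&=\frac{1}{2}\sup_{u\in D\left(\frac{\pi}{4}\right)}\|DP(u)\|_{D\left(\frac{\pi}{4}\right)}.
\end{align*}

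Next I would invoke the preceding theorem, which bounds $\|DP(u)\|_{D\left(\frac{\pi}{4}\right)}\le \Psi(u)\|P\|_{D\left(\frac{\pi}{4}\right)}$ for each $u$ on the sector, together with the remark that $\Psi(x,y)\le 4+\sqrt{2}$ throughout $D\left(\frac{\pi}{4}\right)$. Combining the two displays yields
$$
\|L\|_{D\left(\frac{\pi}{4}\right)}\le \frac{1}{2}\,(4+\sqrt{2})\,\|P\|_{D\left(\frac{\pi}{4}\right)}=\left(2+\frac{\sqrt{2}}{2}\right)\|P\|_{D\left(\frac{\pi}{4}\right)},
$$
which is the asserted inequality.

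It then remains to verify that the constant is optimal, i.e.\ that $2+\frac{\sqrt{2}}{2}$ really is the polarization constant and not merely an upper bound. For this I would test $P_1=Q_{5+4\sqrt{2}}$. By Lemma \ref{lem_ext} this is an extreme point of the unit ball, so $\|P_1\|_{D\left(\frac{\pi}{4}\right)}=1$. Evaluating $\Psi$ at the boundary ray $\theta=\frac{\pi}{4}$, that is at $u^*=\left(\frac{\sqrt{2}}{2},\frac{\sqrt{2}}{2}\right)$, gives $\Psi(u^*)=4+\sqrt{2}$; moreover the extreme polynomial realizing this value in the proof of the theorem (the case $s=5+4\sqrt{2}$) is exactly $Q_{5+4\sqrt{2}}=P_1$, so $\|DP_1(u^*)\|_{D\left(\frac{\pi}{4}\right)}=4+\sqrt{2}$. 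Since $\|DP_1(u)\|_{D\left(\frac{\pi}{4}\right)}\le\Psi(u)\le 4+\sqrt{2}$ for every $u$, the supremum over $u$ equals $4+\sqrt{2}$, and hence $\|L_1\|_{D\left(\frac{\pi}{4}\right)}=2+\frac{\sqrt{2}}{2}$. Thus the constant cannot be improved.

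The argument carries essentially no new analytic difficulty, as the heavy lifting is entirely absorbed by the theorem. The only points demanding care are keeping straight the factor $\tfrac12$ coming from the polarization identity, and confirming that the global maximum of $\Psi$ over the sector is genuinely \emph{attained} (on a bounding ray) so that the estimate is saturated by $P_1$ rather than merely approached.
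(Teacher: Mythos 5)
Your proof is correct and is essentially the paper's own (implicit) argument: the corollary is stated there as an immediate consequence of the preceding theorem, via the polarization identity $L(u,v)=\tfrac{1}{2}DP(u)(v)$ and the observation that $\Psi(x,y)\le 4+\sqrt{2}$ on $D\left(\frac{\pi}{4}\right)$, with the maximum attained by $P_1=Q_{5+4\sqrt{2}}$. Your added details — the factor $\tfrac12$, and checking that equality is genuinely attained at $u^*=\left(\frac{\sqrt{2}}{2},\frac{\sqrt{2}}{2}\right)$ by the extreme polynomial $Q_{5+4\sqrt{2}}$ of norm one — are exactly the bookkeeping the paper leaves to the reader.
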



\section{Unconditional constants for polynomials on sectors}\label{sec:unc_const}

Here, we obtain a sharp estimate on the norm of the modulus of a polynomial in ${\mathcal P}\left(^2D\left(\frac{\pi}{4}\right)\right)$ in terms of it norm. That sharp estimate turns out to be the unconditional constant of the canonical basis of ${\mathcal P}\left(^2D\left(\frac{\pi}{4}\right)\right)$.

\begin{theorem}
The unconditional constant of the canonical basis of ${\mathcal P}\left(^2D\left(\frac{\pi}{4}\right)\right)$ is $5+4\sqrt{2}$. In other words, the inequality
    $$
    \||P|\|_{D\left(\frac{\pi}{4}\right)}\leq (5+4\sqrt{2})\|P\|_{D\left(\frac{\pi}{4}\right)},
    $$
for all $P\in {\mathcal P}\left(^2D\left(\frac{\pi}{4}\right)\right)$. Furthermore, the previous inequality is sharp and equality is attained for the polynomials $\pm P_1(x,y)=\pm Q_{5+4\sqrt{2}}(x,y)=\pm\left[x^2+(5+4\sqrt{2})y^2-(4+4\sqrt{2})xy\right]$.
\end{theorem}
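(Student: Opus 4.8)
The plan is to identify the optimal constant in \eqref{equ:unconditionConstant} as $C = \sup\{\||P|\|_{D(\pi/4)} : \|P\|_{D(\pi/4)} \le 1\}$ and to evaluate it by the Krein--Milman approach used above. The key preliminary observation is that $(a,b,c) \mapsto \||P|\|_{D(\pi/4)}$ is a \emph{convex} function of the coefficients. Indeed, parametrizing $D(\pi/4)$ by $(x,y) = (\cos\theta, \sin\theta)$, $\theta \in [0, \pi/4]$, and using that $|a|x^2 + |b|y^2 + |c|xy \ge 0$ there,
$$\||P|\|_{D(\pi/4)} = \sup_{\theta \in [0,\pi/4]} \left(|a|\cos^2\theta + |b|\sin^2\theta + |c|\cos\theta\sin\theta\right);$$
for each fixed $\theta$ the bracket is a nonnegative linear combination of the convex functions $|a|, |b|, |c|$ (the weights $\cos^2\theta$, $\sin^2\theta$, $\cos\theta\sin\theta$ are all $\ge 0$ on $[0,\pi/4]$), hence convex, and a supremum of convex functions is convex. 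A convex function on the compact convex body $B_{D(\pi/4)}$ attains its maximum at an extreme point, and $\||-P|\| = \||P|\|$, so it suffices to maximize $\||P|\|_{D(\pi/4)}$ over the extreme points of Lemma \ref{lem_ext} with positive sign: $P_t$ with $-1 \le t \le 1$, $Q_s$ with $1 \le s \le 5+4\sqrt2$, and $(1,1,0)$.

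The second step is to run each family through Lemma \ref{lem_norm}. For $|Q_s|$ the coefficient vector is $(1, s, 2\sqrt{2(1+s)})$, and since $s \ge 1$ forces $c(a-b) \le 0$, the second branch gives $\||Q_s|\|_{D(\pi/4)} = \tfrac12\big(1 + s + 2\sqrt{2(1+s)}\big)$, which increases in $s$ and equals $5+4\sqrt2$ at $s = 5+4\sqrt2$ (here $\sqrt{2(1+s)} = \sqrt{12+8\sqrt2} = 2+2\sqrt2$). For $|P_t|$ the coefficient vector is $(|t|, \, 4+t+4\sqrt{1+t}, \, 2+2t+4\sqrt{1+t})$; again $c \ge 0$ and $a-b < 0$, so the same branch yields $\||P_t|\|_{D(\pi/4)} = \tfrac12\big(|t| + 6 + 3t + 8\sqrt{1+t}\big)$. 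Splitting on the sign of $t$ gives $2t+3+4\sqrt{1+t}$ on $[0,1]$ and $t+3+4\sqrt{1+t}$ on $[-1,0]$, both increasing, so the maximum over $t$ is $5+4\sqrt2$ at $t=1$. Finally $\||(1,1,0)|\|_{D(\pi/4)} = 1$. The largest of the three values is $5+4\sqrt2$, giving the upper bound $C \le 5+4\sqrt2$.

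For sharpness I would observe that $P_1 = Q_{5+4\sqrt2} = x^2 + (5+4\sqrt2)y^2 - (4+4\sqrt2)xy$ is an extreme point of $B_{D(\pi/4)}$, hence $\|P_1\|_{D(\pi/4)} = 1$, while the computation above gives $\||P_1|\|_{D(\pi/4)} = 5+4\sqrt2$; this realizes equality and shows $C = 5+4\sqrt2$. I expect the main obstacle to be solely the convexity reduction in the first step: once $P \mapsto \||P|\|_{D(\pi/4)}$ is known to be convex, the problem collapses to the finite-data computations above. The remaining care is bookkeeping --- confirming that the nonnegativity of each $c$ together with $a \le b$ always selects the second branch of Lemma \ref{lem_norm}, and correctly tracking the absolute value $|t|$ when passing from $P_t$ to $|P_t|$ --- but these are routine sign checks rather than genuine difficulties.
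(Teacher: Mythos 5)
Your proposal is correct and follows essentially the same route as the paper: the Krein--Milman reduction to the extreme points of Lemma \ref{lem_ext}, followed by evaluating $\||P_t|\|_{D\left(\frac{\pi}{4}\right)}$ and $\||Q_s|\|_{D\left(\frac{\pi}{4}\right)}$ via Lemma \ref{lem_norm}, with the maximum $5+4\sqrt{2}$ attained at $t=1$ and $s=5+4\sqrt{2}$. Your explicit verification that $P\mapsto\||P|\|_{D\left(\frac{\pi}{4}\right)}$ is convex (using the nonnegativity of $x^2$, $y^2$, $xy$ on the sector) is a detail the paper leaves implicit in its appeal to the Krein--Milman approach, but it does not change the argument.
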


\begin{proof}
We just need to calculate
    $$
    \sup\left\{\||P|\|_{D\left(\frac{\pi}{4}\right)}:\ P\in\ext\left(B_{D\left({\pi \over 4}\right)}\right)\right\}.
    $$
In order to calculate the above supremum we use the extreme polynomials described in Lemma \ref{lem_ext}. If we consider first the polynomials $P_t$, then $|P_t|=\left(|t|,4+t+4\sqrt{1+t},2+2t+4\sqrt{1+t}\right)$. Now, using Lemma \ref{lem_norm} we have
    \begin{align*}
     \sup_{-1\leq t\leq 1}\||P_t|\|_{D\left(\frac{\pi}{4}\right)}&=\sup_{-1\leq t\leq 1}\max\left\{|t|,\frac{1}{2}\left(|t|+4+t+4\sqrt{1+t}+2+2t+4\sqrt{1+t}\right)\right\}\\
    &=\sup_{-1\leq t\leq 1}\frac{1}{2}\left(|t|+6+3t+8\sqrt{1+t}\right)=5+4\sqrt{2}.
    \end{align*}
Notice that the above supremum is attained at $t=1$.
On the other hand, if we consider the polynomials $Q_s$, we have $|Q_s|=\left(1,s,2\sqrt{2(1+s)}\right)$. Now, using Lemma \ref{lem_norm} we have
    \begin{align*}
    \sup_{1\leq s\leq 5+4\sqrt{2}}\||Q_s|\|_{D\left(\frac{\pi}{4}\right)}&=\sup_{1\leq s\leq 5+4\sqrt{2}}\max\left\{1,\frac{1}{2}\left(1+s+2\sqrt{2(1+s)}\right)\right\}\\
    &=\sup_{1\leq s\leq 5+4\sqrt{2}}\frac{1}{2}\left(1+s+2\sqrt{2(1+s)}\right)=5+4\sqrt{2}.
    \end{align*}
Observe that the last supremum is now attained at $s=5+4\sqrt{2}$.
\end{proof}
\section{Conclusions}

Comparing the results obtained in \cite{GMS2} and \cite{MRS} for polynomials on the simplex $\Delta$, in \cite{GMSS} for polynomials on the unit square $\Box$, in \cite{JMPS} for polynomials on the sector $D\left(\frac{\pi}{2}\right)$ and the results obtained in the previous sections, we have the following:

\begin{center}
\bigskip
\renewcommand{\arraystretch}{1.5}
\begin{tabular}{|c||c|c|c|c|}
  \hline
   & ${\mathcal P}(^2\Delta)$ & ${\mathcal P}\left(^2D\left(\frac{\pi}{2}\right)\right)$ & ${\mathcal P}\left(^2D\left(\frac{\pi}{4}\right)\right)$ & ${\mathcal P}(^2\Box)$  \\
  \hline\hline
  Markov constants & $2\sqrt{10}$ & $2\sqrt{5}$ & $4(13+8\sqrt{2})$ & $\sqrt{13}$ \\
  \hline
   Polarization constants & 3 & 2 & $2+\frac{\sqrt{2}}{2}$ & $\frac{3}{2}$ \\
   \hline
   Unconditional Constants & 2 & 3 & $5+4\sqrt{2}$ & 5 \\
  \hline
\end{tabular}
\bigskip
\end{center}
Furthermore, all the constants appearing in the previous table are sharp. Actually, the extreme polynomials where the constants are attained are the following:
\begin{enumerate}
\item $\pm(x^2+y^2-6xy)$ for the simplex.

\item $\pm(x^2+y^2-4xy)$ for the sector $D\left(\frac{\pi}{2}\right)$.

\item $\pm\left(x^2+(5+4\sqrt{2})y^2-(4+4\sqrt{2})xy\right)$ for the sector $D\left(\frac{\pi}{4}\right)$.

\item $\pm(x^2+y^2-3xy)$ for the unit square.
\end{enumerate}

Compare the previous table with similar results that hold for 2-homogeneous polynomials on the Banach spaces $\ell_1^2$, $\ell_2^2$ and $\ell_\infty^2$:

\begin{center}
\bigskip
\renewcommand{\arraystretch}{1.5}
\begin{tabular}{|c||c|c|c|}
  \hline
   & ${\mathcal P}(^2\ell_1^2)$ & ${\mathcal P}\left(^2\ell_2^2\right)$ & ${\mathcal P}(^2\ell_\infty^2)$  \\
  \hline\hline
  Markov constants & $4$ & $2$ & $2\sqrt{2}$ \\
  \hline
   Polarization constants & 2 & 1 & $2$ \\
   \hline
   Unconditional Constants & $\frac{1+\sqrt{2}}{2}$ & $\sqrt{2}$ & $1+\sqrt{2}$ \\
  \hline
\end{tabular}
\bigskip
\end{center}
Observe that the Markov constants of the spaces ${\mathcal P}(^2\ell_1^2)$ and ${\mathcal P}(^2\ell_\infty^2)$ can be calculated taking into consideration the description of the geometry of those spaces given in \cite{CK3}. Also, the Markov constant of ${\mathcal P}(^2\ell_2^2)$ is twice its polarization constant, or in other words, 2.

\noindent On the other hand, the constants appearing in the second line of the previous table are well-known results (see for instance \cite{IS}).

\noindent Finally, the unconditional constants corresponding to the third line of the previous table were calculated in Theorem 3.5, Theorem 3.19 and Theorem 3.6 of \cite{GMS2}.

\bibliographystyle{amsplain}

\end{document}